\definecolor{shadecolor}{rgb}{0.95, 0.95, 0.86}
\def\red#1{\textcolor[rgb]{0.9, 0, 0}{#1} }
\def\blue#1{\textcolor[rgb]{0,0,1}{#1}}
\def \QED {$\blacksquare$ \hfill \par}
\def\remove#1{\red{\cancel{#1}}}
\def\replace#1#2{\red{\cancel{{\hbox{\scriptsize  {$#1$}} }}}{\blue {#2}}}
\def\ra{\rightarrow}
\newcommand{\al}{\alpha}
\def\Id{ \mathrm{Id}} 
\newcommand{\bt}{\beta} 
\newcommand{\bet}{\beta}
\newcommand{\G}{\Gamma} 
\renewcommand{\k}{\varkappa} 
\renewcommand{\d}{\delta}
\newcommand{\D}{\Delta} 
\newcommand{\e}{\epsilon}
\newcommand{\g}{\gamma} 
\newcommand{\la}{\lambda}
\newcommand{\pa}{\partial}
\newcommand{\br}{{\mathbb R}}
\newcommand{\CH}{{\mathcal H}}
\newcommand{\CF}{\mathcal{F}}
\newcommand{\CM}{\mathcal{M}}
\newcommand{\vvvert}{|||} 
\newcommand{\deq}{=}
\newcommand{\bto}{\beta_{\text{od}}}
\newcommand{\bte}{\beta_{\text{ev}}}
\newcommand{\bff}{{\bf f}}
\def\Id{ \mathrm{Id}} 
\newtheorem{theorem}{Theorem}[section]
\newtheorem{example}[theorem]{Example}
\newtheorem{exercise}[theorem]{Exercise}
\newtheorem{lemma}[theorem]{Lemma}
\newtheorem{remark}[theorem]{Remark}
\newtheorem{problem}[theorem]{Riemann-Hilbert Problem}
\newtheorem{proposition}[theorem]{Proposition} 
\newtheorem{corollary}[theorem]{Corollary} 
\newtheorem{definition}[theorem]{Definition}
\def\ri{\right}
\def\bth{\begin{theorem}}
\def\et{\end{theorem}}
\def\bc{\begin{corollary}}
\def\ec{\end{corollary}}
\def\bx{\begin{example}\small}
\def\ex{\end{example}}
\def\bxr{\begin{exercise}\small}
\def\exr{\end{exercise}}
\def\bl{\begin{lemma}}
\def\el{\end{lemma}}
\def\bd{\begin{definition}}
\def\ed{\end{definition}}
\def\bp{\begin{proposition}}
\def\ep{\end{proposition}}
\def\br{\begin{remark}}
\def\er{\end{remark}}
\def\be{\begin{equation}}
\def\ee{\end{equation}}
\newcommand{\bs}{\begin{split}}
\newcommand{\esp}{\end{split}}
\def\ov {\overline}
\def\&{\hspace{-15pt}&}
\def\bea{\begin{eqnarray}}
\def\eea{\end{eqnarray}}
\def\beas{\begin{eqnarray*}}
\def\eeas{\end{eqnarray*}}
\def\bi{\begin{itemize}}
\def\ei{\end{itemize}}
\def \pa{\partial}
\def\C{{\mathbb C}}
\def\R{{\mathbb R}}
\def\N{{\mathbb N}}
\def\wh{\widehat}
\def\a{\alpha}
\def\l{\lambda}
\def\m{\mu}
\def\1{{\bf 1}}
\def\s{ {\sigma}}
\def\th{ {\theta}} 
\def\Th{ {\Theta}} 
\def\z{\zeta}
\def\hf{\frac{1}{2}}
\def\le{\left}
\def\Col{{\rm Col}}
\def\diag{{\rm diag}}
\newcommand{\Rscr}{\mathcal R}
\numberwithin{equation}{section}
\begin{document}
\baselineskip 18pt plus 1pt minus 1pt


\title[Inversion formula and the range of a vector  multi-interval finite Hilbert transform]{Inversion formula and range conditions for a vector 
multi-interval
finite Hilbert transform in $L^2$}
\author[A. Katsevich, M. Bertola, A. Tovbis]{Alexander Katsevich$^1$, Marco Bertola$^{2,3}$ and  Alexander Tovbis$^1$}
\thanks{$^1$ Department of Mathematics, University of Central Florida, P.O. Box 161364, 4000 Central Florida
Blvd, Orlando, FL 32816-1364, USA.   }
\thanks{$^{2}$ {\it  Department of Mathematics and
Statistics, Concordia University\\ 1455 de Maisonneuve W., Montr\'eal, Qu\'ebec,
Canada H3G 1M8}}
\thanks{$^{3}$  {\it SISSA, International School for Advanced Studies, via Bonomea 265, Trieste, Italy }
}
\maketitle

\begin{abstract}
Given $n$ disjoint intervals $I_j$,  on $\R$ together with $n$ functions $\psi_j\in L^2(I_j)$, $j=1,\dots n$, and an $n\times n$ 
matrix $\Th$,
the problem is  to find an $L^2$ solution  $\vec \varphi= \Col (\varphi_1,\dots,  \varphi_n)$, $\varphi_j \in L^2(I_j)$
to the linear system $\chi\Th \CH \vec \varphi \deq \vec\psi$, where 
$\CH= {\rm diag} (\CH_1 ,\dots, \CH_n)$ is a matrix of finite Hilbert transforms and    
$\chi=\text{diag}(\chi_1,\dots,\chi_n)$ is a matrix of the corresponding characteristic functions on $I_j$, and 
$\vec \psi=\Col (\psi_1,\dots,\psi_n)$.
Since we can interpret $\chi\Th \CH \vec \varphi$ as a generalized vector multi-interval finite Hilbert
transform, we call the formula for the solution as ``the inversion formula'' and  the necessary and sufficient conditions 
for the existence of a solution as the  ``range conditions''. 
In this paper we derive the explicit inversion formula and the  range conditions
in two specific cases: a) the matrix $\Th$ is symmetric and positive definite, and; b) all the entries of $\Th$ are equal to one.
We also prove the uniqueness of solution, that is, that our
transform is injective. 
 When
the matrix $\Th$ is positive definite, the inversion formula is given in terms of the solution of the associated matrix
Riemann-Hilbert Problem. 
We also discuss other cases of the matrix $\Th$.
\end{abstract}

\baselineskip 12pt

\section{Introduction}\label{sec-intro}

We start by reminding  the reader of  the  well known inversion formula and range condition for a finite Hilbert transform $\CH$ in $L^2$, see, 
for example, \cite{OkE}. 
Let $I=[\a,\bt]\subset \R$. The finite Hilbert transform   $\CH:~L^2(I)\mapsto L^2(I)$ is defined by 
\be\label{Hilb}
\CH [f](z): =\frac1{\pi } \int_I \frac{ f(\z)}{\z-z} d\z.
\ee
Here and everywhere below, whenever $z$ belongs to the interval of integration, the integral is understood in the Cauchy Principal Value sense. In all other cases, this is just an ordinary integral.
The following  facts about $\CH$ are known:
\begin{enumerate}
 \item The operator $\CH: L^2(I)\mapsto L^2(I)$ is injective  and its range is a proper dense subspace of  $ L^2(I)$
 so that $\CH$ is not a Fredholm operator;
 \item Let $f\in  L^2(I)$. Then $f$ is in the range 
 of $\CH$ if and only if there exists a unique constant $\varkappa\in\C$
 such that
 \be\label{R-1}
-\frac1{\pi R(z)} \int_I \frac{R(\z) f(\z)}{\z-z} d\z- \frac\varkappa{R(z)}     \in  L^2(I),
\ee
 where $R(z)=\sqrt{(z-\bt)(z-\a)}$ and $R(z)\sim z$ as $z\ra\infty$;
 
 \item If $f$  is is in the range  of $\CH$ 
 then
\be\label{inv_H_gen}
\CH^{-1} [f](z) =-\frac1{\pi R(z)} \int_I \frac{R(\z) f(\z)}{\z-z} d\z- \frac\varkappa{R(z)}, 
\ee
where the constant $\varkappa$ is the same as in \eqref{R-1}.

\end{enumerate}

These results can be generalized in several directions: for example, one can consider  functional spaces $L^p(I)$, $p>1$, see \cite{OkE},
or more general singular integral transforms, like, for example, the  $\cosh$-transform (\cite{BKT13}). {\it In the current paper, we  extend the inversion
formula \eqref{inv_H_gen} and the range condition \eqref{R-1} to the case of the  vector multi-interval finite Hilbert transform $\CH$.}
Here is the general setting of the problem.
Given: 
\bi
\item
$n\in\N$ disjoint intervals $I_j=[\a_j,\bt_j]$ on $\R$, $-\infty<\a_1<\bt_1<\a_2<\dots\a_n<\bt_n<\infty$;

\item $n$ {real-valued} functions $\psi_j\in L^2(I_j)$, which we represent as $\vec \psi=\Col (\psi_1,\dots,\psi_n)$;

\item 
an 
$n\times n$ real matrix $\Theta=(\th_{jk})$,
\ei
find the range conditions, the inversion formula and study the uniqueness of solution  for the following system of singular integral equations 
\be\label{mat-eq}
\chi\Th \CH \vec \varphi \deq \vec\psi
\ee
on $I=\cup_{j=1}^nI_j$, where $\vec \varphi= \Col (\varphi_1,\dots,  \varphi_n)$, $\varphi_j \in L^2(I_j)$, and
\be\label{def-Hj}
\CH= {\rm diag} (\CH_1 ,\dots, \CH_n),\ \chi=\text{diag}(\chi_1,\dots,\chi_n).
\ee
Here $\CH_j:~L^2(I_j)\mapsto L^2(I)$ denotes the finite Hilbert transform (FHT), which integrates on $I_j$ and evaluates on $I$. Also, $\chi_j$ denotes the characteristic function of $I_j$, $j=1,2,\dots,n$. Whenever appropriate, $\chi_j$ can also be viewed as restriction operators.
To the best of our knowledge, this is the first paper that analyzes the equation \eqref{mat-eq}.

The problem mentioned above appears naturally in some applications most notably in the study of discrete $\bt$-ensembles,
see, for example,  \cite{BGG}, where $\Th$ is considered as the ``interaction'' matrix, and also in the study of lozenge tilings of polygonal regions.  Solution of these problems depends 
significantly on the particular  matrix $\Th$ in \eqref{mat-eq}.
Most of the results  obtained in this paper results  apply to the case where $\Th$ is a   positive definite symmetric matrix, but we will also consider
some other situations, for example, the case of all $\th_{jk}=1$ (uniform interactions), where $\th_{jk}$ is the $j,k$th entry of $\Th$. 
The analysis of the uniform interaction matrix 
case can be found in Section \ref{sec-unif}.

The main case  of a positive definite symmetric matrix $\Th$ in \eqref{mat-eq} is considered in Sections \ref{sec-2a}-\ref{sec-2}.
The main steps  of our  approach are as follows: we first reduce the operator $\chi \Th \CH $ from \eqref{mat-eq}
to $\Id -\wh K$, where $\wh K$ is a Hilbert-Schmidt operator, and then use the approach of \cite{IIKS} to find the resolvent
of $\Id -\frac{\wh K} \l$ in terms of the solution $\G(z)$ of the corresponding matrix Riemann-Hilbert Problem (RHP). This problem is determined  by
the intervals $I$ (geometry) and by the matrix $\Th$.
Of course, along the way,
we have to prove the invertibility of $\Id -\wh K$.

The plan of the paper is the following. In Section \ref{sec-2a} we obtain the $L^2$ solution of \eqref{mat-eq} (the  inversion formula)
provided that such a solution exists, see Theorem \ref{theo-inv-pos-def}. This solution is expressed in terms of the matrix $\G(z)$ and the 
inverse Hilbert transform $\vec \nu(z)$ of the ``modified'' right hand side $\vec \psi(z)$ of \eqref{mat-eq}. We also state  necessary 
conditions for solvability of  \eqref{mat-eq}, see Lemmas \ref{lem-N1}, \ref{lem-N2}. The condition in Lemma \ref{lem-N1} is expressed in terms of the right-hand side of \eqref{mat-eq},
but the condition in Lemma \ref{lem-N2} is expressed in terms of the solution $\vec \varphi$ of \eqref{mat-eq}.
In Section \ref{sec-3} we prove that conditions of Lemmas \ref{lem-N1}, \ref{lem-N2} are also sufficient,  see Theorem \ref{theo-main}, i.e., they
form the {\it range condtions} for \eqref{mat-eq}. We also express the condition in Lemma \ref{lem-N2}  in terms of $\G(z)$ and $\vec \nu(z)$, see 
\eqref{N2}
and discuss its dependence
 on the geometry of the intervals, that is, on the points $\a_j,\bt_j$. The invertibility of the operator $Id -\wh K$ and the uniqueness of the solution of \eqref{mat-eq} are proven in Section \ref{sec-2}. 

{In Section \ref{sec-other} we gradually relax the  
requirements of  positive definiteness and  symmetry of $\Th$. Replacing   positive definiteness with a much weaker requirement that 
$\Th_d=\text{diag}\Th$ is invertible, we keep all the above mentioned main results provided that the operator  $\Id -\wh K$ is 
invertible. Next, removing the symmetry requirement of $\Th$, we end up with a more complicated expression for the second range condition in Theorem
\ref{theo-main}. Finally, we briefly discuss relaxing the requirement that $\Th_d$ is invertible, which leads to certain  analyticity requirements
on some components of $\vec\psi$. Moreover, some components $\varphi_j$ can be found as jumps over the corresponding intervals $I_j$
of the analytic continuations of the corresponding $\psi_k$ and their finite Hilbert transforms.}

The inversion formula and the range condition for \eqref{mat-eq} with the uniform interaction matrix {(i.e., $\theta_{jk}=1$ for all $j,k$)} can be found in  Section \ref{sec-unif}, see Theorem \ref{theo-unif}. The  method in this section involves diagonalization of a multi interval FHT, which is based on a special change of variables followed by the application of the Fourier transform. This result is closely related to the classical work done in the 50-s and 60-s, most notably \cite{rosenblum66}. See also  \cite{koppinc59, kopp60, widom60, kopp64, pincus64, putnam65} for related earlier work. In these papers the authors use complex-analytic methods to diagonalize and study spectral properties of certain singular operators related to the Hilbert transform. Our approach, while less general, allows to obtain a simple inversion formula and the range condition for the multi interval FHT very quickly and using completely elementary means. These two results, the inversion formula and the range condition, appear to be new.
 
The authors are grateful to Percy Deift for mentioning the problem and useful discussions.


\section{Solution to \eqref{mat-eq} for positive definite symmetric $\Th$}\label{sec-2a}
In this section, we will use the following  result of \cite{OkE} for a finite interval $I=[\a,\bt]$.
\bp\label{prop-R3}
If $f\in  L^2(I)$  and  $\frac{f(z)}{R(z)}\in L^1(I)$ then the range condition \eqref{R-1} can be written as follows
\be\label{R-3}
\int_I \frac{f(\z)d\z} {R(\z)}= 0.
\ee
If $f$ is in the range, the  inversion formula  \eqref{inv_H_gen} can be replaced by
\be\label{inv_H}
\CH^{-1} [f](z) =-\frac{R(z)}{\pi } \int_I \frac{ f(\z)}{R(\z)(\z-z)} d\z.
\ee
\ep
Using \eqref{inv_H}, we immediately obtain that
\be\label{H-inv[1]}
\CH^{-1} [1](z)=0, ~~~z\in I,~~~~~{\rm and}~~~~\CH^{-1} [1](z)=-i, ~~~~~~z\in \C\setminus I,
\ee
where by $\CH^{-1} [1](z)$, $z\not\in I$, we understand the expression \eqref{inv_H} evaluated at $z$.

Let us split 
\be\label{Q-dec}
\Th =\Th_d+\Th_o,
\ee
where 
\be\label{Th_od}
\Th_d:=\text{diag}\Th,\quad \Th_o:=\Th-\Th_d.
\ee
For the time being we assume that  $\Th_d$ is invertible.

Assume that a solution $\vec\varphi\in L^2(I)$ to \eqref{mat-eq} exists. Then, according to Theorem \ref{theo-inv}, proven below, the solution  $\vec\varphi$ is unique and equation  \eqref{mat-eq} can be written as 
\be\label{mat-eq-sep}
\chi\Th_d \CH \vec \varphi + \chi\Th_o \CH \vec \varphi \deq \vec\psi.
\ee

\br\label{rem-funct-not}
A function $\varphi(z)\in L^2(I)$ can be uniquely identified with a vector-function 
$\vec\varphi=\Col(\varphi_1,\dots,  \varphi_n)$, where $\varphi_j(z)=\chi_j(z)\varphi(z)$. With a mild abuse of notations, we will consider $\varphi(z)$ and $\vec\varphi $ as the same object in this paper, and use these notations intermittently, as is convenient in a given context. 
\er

Let $\Rscr \subset L^2(I)$
denote the 
range of the operator $\chi\CH$. 
Note that  $\vec\psi\in\Rscr$ if and only if $\chi\Th_o \CH \vec \varphi\in\Rscr$  given that the first term in \eqref{mat-eq-sep} is automatically in the range. However,
since the $m$-th row of 
$\Th_o \CH \vec \varphi$
is analytic on
$I_m$,  
  there exists, according to the range condition \eqref{R-3}, a constant vector
$\vec c\in\R^n$, such that 
\be\label{off-d-ran}
\chi\Th_o \CH \vec \varphi-\vec c\in \Rscr.
\ee
Therefore, there must be a  constant vector
$\vec c\in\R^n$, such that  $\vec\psi -\vec c\in\Rscr$.
The uniqueness of such $\vec c$ follows from the fact that any nonzero constant vector $\vec c$ cannot belong to the range, $\vec c\not\in\Rscr$. Thus, we have proved the following statement.

\bl\label{lem-N1}
 If equation \eqref{mat-eq} is solvable, then there exists a unique constant vector
$\vec c\in\R^n$ such that $\vec\psi -\vec c\in\Rscr$.
\el

Given $\vec\psi\in L^2(I)$, we denote by $\vec c[\vec\psi]$ the vector $\vec c$ from 
Lemma \ref{lem-N1}. { The existence of $\vec c[\vec\psi]$ is the first necessary condition for
the solvability of \eqref{mat-eq} in $L^2(I)$}. The  second necessary condition, given 
in Lemma \ref{lem-N2} below, follows from the above mentioned arguments.

\bl \label{lem-N2}
If $\vec\varphi\in L^2(I)$ is the solution of \eqref{mat-eq} then
\be\label{range-2}
\chi\Th_o \CH \vec \varphi-\vec c[\vec\psi]\in\Rscr.
\ee
\el

Rewriting equation \eqref{mat-eq} as follows
\be\label{mat-eq1}
 \chi\Th_d \CH \vec \varphi+\left(\chi\Th_o \CH \vec \varphi-\vec c[\vec\psi]\right) \deq \vec\psi-\vec c[\vec\psi],
\ee
we obtain 
\be\label{mat-eq2}
 \vec \varphi+ \CH^{-1}\left(\Th^{-1}_d \Th_o \CH \vec \varphi-\Th^{-1}_d\vec c[\vec\psi]\right) \deq \vec\nu,\quad 
\vec\nu:= \CH^{-1}\Th^{-1}_d \left(\vec\psi-\vec c[\vec\psi]\right),
\ee
where $\CH^{-1}:={\rm diag}(\CH^{-1}_1,\dots, \CH^{-1}_n)$.
Here $\CH^{-1}_j$ is given by \eqref{inv_H_gen} 
with $I=I_j$.

\br \label{rem-notat}
Note that in view of our definition of $\CH_j$, here and everywhere below, the operator $\CH^{-1}_j$ is the inverse not to
$\CH_j$, but to $\chi_j\CH_j$, and the latter coincides with the usual finite Hilbert transform on $I_j$. Note also that whenever $\CH^{-1}_j$ acts 
on a function defined on all of $I$, it is applied only to its restriction on $I_j$.
\er

According to the first formula in \eqref{H-inv[1]}, equation
\eqref{mat-eq2} becomes
\be\label{mat-eq3}
 \vec \varphi+ \CH^{-1} \Th^{-1}_d \Th_o \CH \vec \varphi \deq \vec\nu,
\ee
or, in operator form,
\be\label{operKn}
(\Id - \wh K )\vec\varphi = \vec\nu, \qquad \text{where} \quad \wh K= 
\le[\begin{array}{cccc}
 0 & -\frac{\th_{12}}{\th_{11}}\CH^{-1}_1\CH_2 &\dots& -\frac{\th_{1n}}{\th_{11}}\CH^{-1}_1\CH_n\\
 -\frac{\th_{21}}{\th_{22}}\CH^{-1}_2\CH_1 & 0&\dots&  -\frac{\th_{2n}}{\th_{22}}\CH^{-1}_2\CH_n\\
 \dots&\dots&\dots&\dots \\
 -\frac{\th_{n1}}{\th_{nn}}\CH^{-1}_n\CH_1 & -\frac{\th_{n2}}{\th_{nn}}\CH^{-1}_n\CH_2&\dots&0\\
\end{array}\ri].
\ee

Let $R_j(z)=\sqrt{(z-\a_j)(z-\bt_j)}$, where $R_j(z)\sim z$ as $z\ra\infty$.  We need the following statements.

\bl \label{lem-H-1H}
For any $\phi\in L^2(I_k)$ and $j\not=k$ we have 
\be \label{H-1H}
-\CH^{-1}_j\CH_k\phi =\frac{R_j(z)}{\pi i} \int_{I_k}\frac{\phi(x)dx}{(x-z)R_j(x)},
\ee
where $z\in I_j$, and $\CH_j^{-1}$ can be represented in the alternative form \eqref{inv_H} (with $I$ replaced by $I_k$).
\el

\begin{proof}
 
According to \eqref{inv_H},
\be\label{comp-ker-or}
-\CH^{-1}_j\CH_k\phi= \frac{R_j(z)}{\pi^2}\int_{I_j}\frac{dy}{(y-z)R_{j+}(y)}\int_{I_k}\frac{\phi(x)dx}{(x-y)}
=\frac{R_j(z)}{\pi^2}\int_{I_k}\phi(x)dx   \int_{I_j}\frac{dy}{(x-y)(y-z)R_{j+}(y)}.    
\ee

 Using the identity $\frac{1}{(x-y)(y-z)}=\frac{1}{x-z}\le[ \frac{1}{x-y}+\frac{1}{y-z}\ri]$ and \eqref{H-inv[1]}
we calculate 
\be\label{comp-ker}
\CH^{-1}_j\CH_k\phi=-\frac{R_j(z)}{\pi i}\int_{I_k}\frac{\phi(x)dx}{(x-z)R_j(x)}.
\ee
 \end{proof}

According to Lemma \ref{lem-H-1H}, the operator $\wh K$ can be written as 
\be\label{operKn1}
 \wh K \vec\varphi(z)= 
-i\chi
\le[\begin{array}{cccc}
 0 & \frac{\th_{12}R_1}{\th_{11}}\CH_2\le[\frac{\varphi_2}{R_1}\ri] &\dots& \frac{\th_{1n}R_1}{\th_{11}}\CH_n\le[\frac{\varphi_n}{R_1}\ri]\\
 \frac{\th_{21}R_2}{\th_{22}}\CH_1\le[\frac{\varphi_1}{R_2}\ri] & 0&\dots&  \frac{\th_{2n}R_2}{\th_{22}}\CH_n\le[\frac{\varphi_n}{R_2}\ri]\\
 \dots&\dots&\dots&\dots \\
 \frac{\th_{n1}R_n}{\th_{nn}}\CH_1\le[\frac{\varphi_1}{R_n}\ri] & \frac{\th_{n2}R_n}{\th_{nn}}\CH_2\le[\frac{\varphi_2}{R_n}\ri]&\dots&0\\
\end{array}\ri]=
\int_I K(z,x)\varphi(x)dx,
\ee
where the kernel $K$ of the integral operator $\wh K: L^2(I)\mapsto L^2(I)$ is given by
\be\label{kernKn}
K(z,x)=2\frac{\sum'_{j, k}\frac{\th_{jk}R_{j+}(z)}{\th_{jj}R_j(x)}\chi_j(z)\chi_k(x)}{2\pi i(x-z)}.
\ee
Here the prime notation in the summation  symbol in \eqref{kernKn}  means that  $j,k=1,\dots, n$, $k\neq j$. It is clear that 
$\wh K$ is a Hilbert-Schmidt operator since $\int_{I\times I} |K(z,x)|^2 dz dx< \infty$.

The following lemma requires that $\Th$ is a  positive-definiteness symmetric matrix. 

\bl\label{lem-specK}
 If $\Theta$ is positive definite then $\l=1$ is not in the spectrum of $\wh K$. 
\el

\begin{proof} If $\l=1$ is in the spectrum of $\wh K$ then $\l=1$ must be an eigenvalue of $\wh K$, so that there is a nontrivial $f\in L^2(I)$ satisfying $\wh K f =f$.
 Applying
 the operator $\Th_d\CH$ to both parts of $\wh K f =f$ , we obtain equation $\chi \Th \CH \vec f = \vec c$  with $\vec c=\vec c[\vec\psi]$
 satisfying \eqref{range-2}.
 However, according to Theorem \ref{theo-inv}, the latter equation  has only  trivial solution in $L^2(I)$.
 The argument is   completed.
\end{proof}

The solution to the equation \eqref{operKn} is given by 
\be\label{inv-0}
\vec\varphi= \le(\Id - {\wh K} \ri)^{-1}\vec\nu=  (\Id + \wh R(1) )\vec\nu,
\ee
where $\wh R(\l)$ denotes the resolvent operator for $\wh K$. The resolvent $\wh R$ is defined by the equation
\be\label{whR}
(\Id +\wh R)\le(\Id - \frac 1\l \wh K\ri)=\Id.
\ee
According to Lemma \ref{lem-specK}, $\wh R(\l)$ is analytic at $\l=1$.

In order to construct the resolvent kernel ${\bf R}(z,x,\l)$ of $\wh R$  we use the approach of \cite{IIKS}, see also \cite{BKT16}.
First observe that 
\be\label{IIKs}
K(z,x)=\frac{\vec f^t(z)\vec g(x)}{2\pi i(z-x)}~~~{\rm and}~~~\vec f^t(z)\vec g(z) \equiv 0,\  z\in I,
\ee
where
\be\label{f,g}
\begin{split}
&\vec f(z):= {-}2\text{Col}\le({R_{1+}(z)}\chi_1(z),\dots, {R_{n+}(z)}\chi_n(z) \ri),\\
&\vec g(x):=  \text{Col}\le(\sum_{k\neq 1} \frac{\th_{1k}}{\th_{11} R_1(x)} \chi_k(x), \dots,
 \sum_{k\neq n} \frac{\th_{nk}}{\th_{nn} R_n(x)}  \chi_k(x)\ri),
\end{split}
\ee
and   $\vec g^t$ denotes the transposition of $\vec g$. Integral kernels $K$ of the form \eqref{IIKs} are called {\it integrable}. Equations \eqref{f,g} in 
the vector form become
\be\label{fg-vect}
\vec f(z)={-}2\chi(z)R(z)e,\ \vec g(z)=R^{-1}(z)\Th_d^{-1}\Th_o\chi(z)e,
\ee
where
\be\label{R}
e=\text{Col}(1,1\dots,1),\quad R(z)=\text{diag}(R_1(z),\dots,R_n(z)).
\ee

Consider the following matrix Riemann-Hilbert Problem (RHP).

\begin{problem}
\label{RHPGamma}
Find an $n\times n$ matrix-function $\G=\G(z;\l)$, $\l\in\C\setminus\{0\}$, which is: a) analytic in 
${\C}\setminus I$;  b) $\G(\infty;\l)=\1$; c) admits non-tangential boundary values   from the upper/lower 
half-planes that belong to $L^2_{loc}(I)$, 
 and; d) satisfies the following jump condition on $I$
\be
\label{rhpG_mat}
\G_+(z;\l)=\G_-(z;\l)\le(\1-\frac{1}{\l}\vec f(z)\vec {g^t}(z)\ri) 
=\G_-(z;\l)V(z;\l).
\ee
\end{problem}
For  convenience, we will frequently omit the dependence on $\l$ in the notations.

\br\label{rem-uniG}
Using standard arguments, one can show that the solution to the RHP \ref{RHPGamma}, if it exists, is unique.
\er

In more explicit terms, we note that the jump matrix in \eqref{rhpG_mat} is given by:
\be\label{jumpG}
V(z;\l)= \le[\begin{array}{cccc}
 1 & 2\frac{\th_{12}R_{1+}(z)}{\l\th_{22}R_{2}(z)}\chi_1(z)&\dots & 2\frac{\th_{1n}R_{1+}(z)}{\l\th_{nn}R_{n}(z)}\chi_1(z) \\
2\frac{\th_{21}R_{2+}(z)}{\l\th_{11}R_{1}(z)}\chi_2(z) & 1 &\dots & 2\frac{\th_{2n}R_{2+}(z)}{\l\th_{nn}R_{n}(z)}\chi_2(z) \\
 \dots&\dots&\dots&\dots \\
2\frac{\th_{n1}R_{n+}(z)}{\l\th_{11}R_{1}(z)}\chi_n(z) &   2\frac{\th_{n2}R_{n+}(z)}{\l\th_{22}R_{2}(z)}\chi_n(z) &\dots&1\\ 
\end{array}\ri].
\ee

\br\label{rem-beh-of -col}
Let $\G_j(z)$ denote the $j$ th column of $\G(z)$ and $\D_k(\vec f)$ denote the jump of the vector-function  $\vec f=\vec f(z)$ over $I_k$,
$j,k=1, dots, n$. It follows from \eqref{rhpG_mat}, \eqref{jumpG} that 
\be\label{jump-col}
\D_j\G_j=0 \quad \text{ and} ~~ 
\D_k\G_j=2\frac{\th_{kj}R_{k+}}{\th_{jj}R_{j}}\G_{k-}, ~k\neq j.
\ee
The first equation of \eqref{jump-col} together with the requirement c) from the RHP \ref{RHPGamma} imply that $\G_j$ is analytic in
$\bar\C\setminus\cup_{m\neq j}I_m$
 for any $j=1,\dots,n$. The second equation of \eqref{jump-col} yields
 \be\label{Gj-Cauchy}
 \G_j(z)=\frac{1}{\pi i}\sum_{k\neq j}\int_{I_k}\frac{\th_{kj}R_{k+}(\z)\G_k(\z)d\z }{\th_{jj}R_{j}(\z)(\z-z)}, 
\ee
which implies that $\G_{j\pm}(z)$ are analytic in the interior of $I_k$ and are bounded at its endpoints $\a_k,\bt_k$.
Thus, we showed that $\G_\pm (z)$ are bounded at all the endpoints and analytic in the interior  of each $I_j$. 
 \er

The utility of the RHP \ref{RHPGamma} is demonstrated by the  following lemma. 

\bl\label{lem-kernR}
If $\l$ is such that the solution $\G(z;\l)$ of the RHP \ref{RHPGamma} exists, then the kernel 
${\bf R}$ of the resolvent $\wh R$ defined by \eqref{whR} 
is given by 
\be
\label{resolvent}
{\bf R}(z,x;\l) =   
\frac{
\vec {g^t}(x) \Gamma^{-1}(x;\l) \Gamma(z;\l)\vec f(z)} {2\pi  i \l  (z-x)}.
\ee
\el

The proof of this lemma can be found, for example, in  \cite{BKT16}, Lemma 3.16.

\br\label{rem-non-sing}
Note that due to the second equation in \eqref{IIKs}: a) the integral kernel ${\bf R}(z,x;\l)$ given by \eqref{resolvent} is non-singular; 
and b) {
$ \Gamma_+(z)\vec f(z)= \Gamma_-(z)\vec f(z)$ and $\vec {g^t}(z) \Gamma_+^{-1}(z) =\vec {g^t}(z) \Gamma_-^{-1}(z)$ on $I$. Thus,
it does not matter whether we use $\G_+$ or $\G_-$ in the equation \eqref{resolvent}.
Indeed, according to \eqref{rhpG_mat},
\bea\label{gGGf}
&&\Gamma_+^{-1}(z;\l)=\le(\1+\frac{1}{\l}\vec f(z)\vec {g^t}(z)\ri)\Gamma_-^{-1}(z;\l)~~\text{ on $I$, so}~~\cr
&&\vec {g^t}(z)\Gamma_+^{-1}(z;\l)=\vec {g^t}(z)\le(\1+\frac{1}{\l}\vec f(z)\vec {g^t}(z)\ri)\Gamma_-^{-1}(z;\l)=\vec {g^t}(z)\Gamma_-^{-1}(z;\l)
\eea
on $I$. Similarly, we can show that $\Gamma(z)\vec f(z)$ has no jump on $I$.}
\er

As we have seen, the existence of $\G(z;\l)$, $\l\in \C\setminus\{0\}$, implies that $\l$ is a regular (non-spectral) point of $\wh K$. In fact,  the converse is also true, as is shown by the following lemma.


\bl\label{lem-exist-G}
$\l\in \C\setminus\{0\}$ is a regular point of the operator $\wh K$ with an integrable kernel
{if and only if} the solution $\G(z;\l)$ to the RHP \ref{RHPGamma} exists.
\el
\begin{proof}
In view of Lemma \ref{lem-kernR} it is sufficient to prove  
the existence of $\G(z;\l)$ for  any regular (non-spectral) point $\l$ of $\wh K$.
Suppose the operator $(\Id -\frac{\wh K}{\l})$  with the kernel given by \eqref{IIKs}, \eqref{f,g} is invertible. Write the inverse as $\Id + \wh R(\l)$. Define 
\be
\vec F(z;\l)  := \le(\Id -\frac{\wh K}{\l}\ri)^{-1} \vec f(z) = (\Id +\wh R(\l)) \vec f(z)
\label{defF}
\ee
and define the matrix 
\be
\G(z;\l):= \1 - \int_I \frac {\vec F(w;\l) \vec g^t(w) { d} w}{2\pi i\l(w-z)}.
\label{defG}
\ee
From \eqref{defG} and the Plemelj-Sokhotski theorem, 
\be
\G_+(z;\l) - \G_-(z;\l) =  -\frac 1\l\vec  F(z;\l)\vec g^t(z),\ z\in I,
\label{JG_1}
\ee
which implies
\be\label{JG_2} 
{\G_+(z;\l) \vec f(z)} = \G_-(z;\l)\vec f(z) - \frac 1\l\vec  F(z;\l)\vec g^t(z)\vec f(z)={ \G_-(z;\l)\vec f(z)},\ z\in I.
\ee
Thus, $\G(z;\l) \vec f(z)$ has no jump across $I$. 
We used \eqref{IIKs} in \eqref{JG_2}.
On the other hand, from \eqref{IIKs}, \eqref{defF}, and the definition \eqref{defG}, we also have 
\be\begin{split}
\G_\pm(z;\l) \vec f(z) &= \vec f(z)  -  \int_I \frac {\vec F(w;\l) \overbrace{\vec g^t(w)\vec f(z) }^{\hbox{\tiny{ scalar}}} {\rm d} w}{2\pi i\l(w-z)_-}
= \vec f + \frac 1\l \wh K \vec F \\
&=\vec f - \le(\Id - \frac {\wh K} \l\ri) \vec F +  \vec F
= \vec f - \le(\Id - \frac{\wh K}\l\ri)(\Id+\wh R(\l)) \vec f +  \vec F  = \vec F(z;\l).
\end{split}
\ee
Therefore \eqref{JG_1} becomes 
\be
\G_+(z;\l) - \G_-(z;\l) =  -\frac{1}{\l}\G_-(z;\l)\vec  f(z)\vec g^t(z)\ \ \Leftrightarrow \ \ \ 
{\G_+(z;\l) = \G_-(z;\l)\le( \1 - \frac{1}{\l}\vec  f(z)\vec g^t(z)\ri)},
\ee
so we recover the jump condition \eqref{rhpG_mat} for $\G(z;\l)$. The remaining requirements of the RHP \ref{RHPGamma} follow
from the definitions \eqref{defF} and \eqref{defG}.
\end{proof}


We are now ready to formulate the inversion formula.

\begin{theorem}\label{theo-inv-pos-def}
 If $\Th$ is a positive definite symmetric matrix and if the solution $\varphi\in L^2(I)$ to \eqref{mat-eq} exists, then
\be\label{sol-phi}
\varphi(z)=\nu(z)+\int_I \frac{
\vec {g^t}(x) \Gamma^{-1}(x) \Gamma(z)\vec f(z)\nu(x)dx} {2\pi  i   (z-x)},
\ee
 where $\G(z)=\G(z;1)$ solves the RHP \ref{RHPGamma}, and $\nu$  and $\vec f, \vec g$   are defined by \eqref{mat-eq2} and \eqref{f,g}, respectively.
\end{theorem}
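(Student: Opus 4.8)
The plan is to reassemble the tools built up above in the right order; no new analysis is needed beyond invoking them. Assume $\varphi\in L^2(I)$ solves \eqref{mat-eq}. First I would record that Lemma \ref{lem-N1} furnishes the vector $\vec c[\vec\psi]$, so that the equivalent rewritings \eqref{mat-eq1}--\eqref{operKn} apply: subtracting $\vec c[\vec\psi]$, multiplying by $\Th_d^{-1}$, and applying $\CH^{-1}=\mathrm{diag}(\CH_1^{-1},\dots,\CH_n^{-1})$ turns \eqref{mat-eq} into the integral equation $(\Id-\wh K)\vec\varphi=\vec\nu$ of \eqref{operKn}, with $\vec\nu$ as in \eqref{mat-eq2}. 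Here I would note explicitly that the two necessary conditions are exactly what legitimises these manipulations: by Lemma \ref{lem-N1}, $\vec\psi-\vec c[\vec\psi]\in\Rscr$, so $\vec\nu=\CH^{-1}\Th_d^{-1}(\vec\psi-\vec c[\vec\psi])$ is a well-defined element of $L^2(I)$ via the constant-free form \eqref{inv_H}; and by Lemma \ref{lem-N2}, $\chi\Th_o\CH\vec\varphi-\vec c[\vec\psi]\in\Rscr$, so $\CH^{-1}$ may be applied to it termwise and the constant contribution vanishes because $\CH^{-1}[1]\equiv 0$ on $I$ by \eqref{H-inv[1]}.

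Next I would invoke positive definiteness. By Lemma \ref{lem-specK}, $\l=1$ is not in the spectrum of the Hilbert--Schmidt operator $\wh K$, hence $\Id-\wh K$ is boundedly invertible on $L^2(I)$ and $\vec\varphi$ is its unique solution, $\vec\varphi=(\Id+\wh R(1))\vec\nu$, where $\wh R$ is the resolvent \eqref{whR}. Since $\l=1$ is a regular point, Lemma \ref{lem-exist-G} guarantees that the RHP \ref{RHPGamma} has a (unique) solution $\G(z)=\G(z;1)$, and Lemma \ref{lem-kernR} then gives the resolvent kernel ${\bf R}(z,x;1)=\vec g^t(x)\G^{-1}(x)\G(z)\vec f(z)/(2\pi i(z-x))$, which is nonsingular by Remark \ref{rem-non-sing}. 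Writing $\vec\varphi=\vec\nu+\wh R(1)\vec\nu$ in integral form, i.e. $\varphi(z)=\nu(z)+\int_I {\bf R}(z,x;1)\nu(x)\,dx$ (using the identification of $\varphi$ with $\vec\varphi$), and substituting the kernel yields \eqref{sol-phi}.

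The proof is therefore essentially bookkeeping: the substantive content — invertibility of $\Id-\wh K$ from positive definiteness (Lemma \ref{lem-specK}, which itself rests on the uniqueness Theorem \ref{theo-inv}), the IIKS-type resolvent formula (Lemma \ref{lem-kernR}), and solvability of the RHP at $\l=1$ (Lemma \ref{lem-exist-G}) — is already in place. The one point that genuinely demands attention is verifying that every application of $\CH^{-1}$ in the reduction acts on a function belonging to $\Rscr$, so that the constant-free inversion \eqref{inv_H} is valid and no spurious $\varkappa/R(z)$ terms survive; this is precisely the role of the two necessary conditions of Lemmas \ref{lem-N1} and \ref{lem-N2}, which is why they must be invoked before the inversion formula can be written down.
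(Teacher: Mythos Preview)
Your proposal is correct and follows essentially the same approach as the paper: invoke Lemma~\ref{lem-specK} to place $\lambda=1$ outside the spectrum of $\wh K$, then Lemmas~\ref{lem-exist-G} and~\ref{lem-kernR} to obtain $\Gamma(z;1)$ and the resolvent kernel, and read off \eqref{sol-phi} from $(\Id+\wh R(1))\vec\nu$. The paper's proof is a three-line version of what you wrote; your extra care in explaining why Lemmas~\ref{lem-N1} and~\ref{lem-N2} legitimise the passage from \eqref{mat-eq} to \eqref{mat-eq3} is accurate but already absorbed into the derivation preceding the theorem, so the paper does not repeat it.
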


\begin{proof}
According to Lemma \ref{lem-specK}, $\l=1$ is a regular point of the operator $\wh K$. Therefore, according to Lemmas \ref{lem-kernR} and \ref{lem-exist-G}, $\G(z)$ exists, and the resolvent kernel is given by \eqref{resolvent}. Thus, the solution of \eqref{mat-eq3} is given by \eqref{sol-phi}.
\end{proof}
  
\br The equation \eqref{sol-phi} can be written component-wise as 
\be\label{sol-phi-comp}
\varphi_m(z)=\nu_m(z)+\frac{R_m(z)}{\pi i}\sum_{k\neq m}
\int_{I_k} \sum_{j\neq k}\frac{\th_{jk}{\Gamma^{-1}_{kj}(x)\Gamma_{jm}(z)}\nu_j(x)dx}{\th_{jj}R_j(x)(x-z)},\ z\in I_m,\,m=1,2,\dots,n.
\ee
\er


\section{Necessary and sufficient conditions for the existence of the $L^2(I)$ solution to the equation \eqref{mat-eq}}\label{sec-3}

In Section \ref{sec-2a} we obtained two  conditions: a) the existence of { the constant (but depending on $\psi$)  vector} 
$\vec c[\vec\psi]$, and
b) the requirement \eqref{range-2}. These  conditions are necessary for the existence of an $L^2(I)$ solution to   \eqref{mat-eq}.
In this section we first express the condition b) in terms of
{$\vec\nu= \CH^{-1} \Th^{-1}_d \left(\vec\psi-\vec c[\vec\psi]\right)$, where $\vec \psi$ is the original data, see \eqref{mat-eq2},}
and then show that the conditions a) and b) are also sufficient for the existence of the   $L^2(I)$ solution to \eqref{mat-eq}.

According to \eqref{R-3}, the   equation \eqref{range-2} can be written as follows
\be\label{R-4v}
-\frac{1}{\pi i}\int_IR^{-1}\chi\Th_o\CH\vec\varphi dz=\vec c[\vec\psi],
\ee
or, component-wise, 
\be\label{R-4s}
-\frac{1}{\pi^2 i}\int_{I_m}\frac {dz}{ R_{m+}(z)}\sum_{k\neq m}\th_{mk}\int_{I_k}\frac{\varphi_k(x) dx}{x-z}=c_m[\vec\psi],
\ee
where $c_m[\vec\psi]$ denotes the $m$-th component of  $\vec c[\vec\psi]$. Changing the order of integration and using the second equation in \eqref{H-inv[1]}, we rewrite
\eqref{R-4s} as follows
\be
\label{R-5s}
\frac{1}{\pi }\sum_{k\neq m}\th_{mk}\int_{I_k}\frac{\varphi_k(y) dy}{R_m(y)}=c_m[\vec\psi],~~~~m=1,2,\dots, n.
\ee
Substitution of \eqref{sol-phi} into \eqref{R-5s} yields
\be\label{J12}
\begin{split}
c_m[\vec\psi]&=\frac{1}{\pi }\sum_{k\neq m}\th_{mk}\int_{I_k}\frac{\nu_k(y) dy}{R_m(y)}+
\frac{1}{\pi }\sum_{k\neq m}\th_{mk}\int_{I_k}\frac{1}{R_m(y)}\int_I\frac{\nu(x)
\vec g^t(x) \Gamma^{-1}(x) \Gamma(y)\vec f(y)dx} {2\pi  i   (y-x)}dy\\
&=:J_1+J_2,
\end{split}
\ee
where $J_j$, $j=1,2$, denote the corresponding terms in \eqref{J12}. 
Recall that for $y\in I_k$, according to \eqref{f,g}, we have  $\G(y)\vec f(y)=-2\G_k(y)R_{k+}(y)$, where $\G_k$ denotes the $k$-th column of 
the matrix $\G$. Then,
changing the order of integration in $J_2$ (cf. Remark \ref{rem-non-sing}), we obtain
\be\label{J2}\begin{split}
J_2&=\frac{1}{\pi }\int_I \nu(x)
\vec {g^t}(x) \Gamma^{-1}(x)\le[
\sum_{k\neq m}\th_{mk}\int_{I_k}\frac{\Gamma(y)\vec f(y)dy}  {2\pi  i   (y-x){R_m(y)}}\ri]dx\\
&=-\frac{1}{\pi }\int_I \nu(x)
\vec {g^t}(x) \Gamma^{-1}(x)\le[
\sum_{k\neq m}\th_{mk}\int_{I_k}\frac{\Gamma_k(y) R_{k+}(y)dy}  {\pi  i   (y-x){R_m(y)}}\ri]dx.
\end{split}
\ee

Let $\hat\g$ be a large negatively oriented circle containing $I$. Pick a point $x\in I$. Then, using Remark \ref{rem-beh-of -col},
\be\label{cont-int}
\begin{split}
-\vec e_m&=\frac{1}{2\pi i}\oint_{\hat\g}\frac{\G_m(\z)d\z}{\z-x}=\frac{1}{2\pi i}\sum_{k\neq m}\int_{I_k}\frac{\D_k\G_m(\z)d\z}{\z-x}
-\hf\sum_{j=1}^n[\G_{m+}(x)+\G_{m-}(x)]\chi_j(x)\\
&=\frac{1}{\pi i}\sum_{k\neq m}\int_{I_k}\frac{\th_{km}R_{k+}(\z)\G_{k-}(\z)d\z}{
\th_{mm}
R_{m}(z)(\z-x)}-
\hf\sum_{j=1}^n[\G_{m+}(x)+\G_{m-}(x)]\chi_j(x).
\end{split}
\ee
Using the symmetry of $\Th$, we obtain
\be\label{the-formula}
-\frac{1}{\pi i}\sum_{k\neq m}\int_{I_k}\frac{\th_{mk}R_{k+}(\z)\G_{k-}(\z)d\z}{R_{m}(z)(\z-x)}=
\th_{mm}
\le[\vec e_m
-\hf\sum_{j=1}^n[\G_{m+}(x)+\G_{m-}(x)]\chi_j(x)
\ri].
\ee
Substituting
\eqref{the-formula} into \eqref{J2} we obtain
\be
J_2=\frac{
\th_{mm}}{\pi }\int_I \nu(x)
\vec {g^t}(x) \Gamma^{-1}(x)\le[\vec e_m-\hf\sum_{j=1}^n[\G_{m+}(x)+\G_{m-}(x)]\chi_j(x) \ri]dx,
\ee
where, according to Remark \ref{rem-non-sing}, $\vec {g^t}(x) \Gamma^{-1}(x)$ does not have a jump on $I$.
Note that 
 $\Gamma^{-1}(x)\G_m(x)=\vec e_m$, where $\vec e_m$ is the $m$-th standard basis vector and,
according to \eqref{f,g}, \eqref{fg-vect},
\be\label{aux-J2}
\vec {g^t}(x)\chi_j(x) \vec e_m=  
\vec {e_j^t}\Th_o\Th^{-1}_dR^{-1}(x)\vec e_m\chi_j(x)=
\left\{
\begin{array}{cc}
\frac{\th_{jm}}{\th_{mm}R_m(x)}\chi_j(x),  &  ~{\rm if}~j\neq m,\\
0,  &   ~{\rm if}~j= m.
\end{array}
\right.
\ee 
Thus, according to  \eqref{fg-vect} and \eqref{aux-J2},
\be\label{J2fin}
\begin{split}
J_2&=\frac{\th_{mm}
}{\pi }\le[\int_I \nu(x)
\vec {g^t}(x) [\Gamma^{-1}(x)]_mdx-\sum_{k\neq m}\th_{km}\int_{I_k}  \frac{\nu_k(x) dx}{\th_{mm}R_m(x)}  \ri]\\
&=\frac{
\th_{mm}}{\pi }\sum_{k\neq m}\int_{I_k}    \le[\Th_o\Th^{-1}_d
R^{-1}(x)\Gamma^{-1}(x)\ri]_{km}\nu_k(x)dx-J_1,
\end{split}
\ee
where $[\cdot]_{km}$ denotes the $k,m$-th entry of the matrix. Substituting \eqref{J2fin} into \eqref{J12}, we obtain the second 
necessary condition \eqref{N2} in the form
\be\label{N2}
c_m[\vec\psi]
=\frac{
\th_{mm}}
{\pi }\sum_{k\neq m}\int_{I_k}    \le[\Th_o
\Th^{-1}_d
R^{-1}(x)\Gamma^{-1}(x)\ri]_{km}\nu_k(x)dx,
~~~~\qquad m=1,\dots, n,
\ee
where, according to Remark \ref{rem-non-sing} and \eqref{J2fin}, the right hand side of \eqref{N2} is independent of the choice 
of $\G^{-1}_+$ or $\G^{-1}_-$.

\bth\label{theo-main}
The system \eqref{mat-eq} with a positive-definite symmetric matrix $\Th$ has a solution $\varphi\in L^2(I)$ if and only if the following
two conditions are satisfied:
\begin{enumerate}
 \item There exists a constant vector $\vec c[\vec \psi]\in \mathbb R^n$ such that $\vec\psi-\vec c[\vec\psi]\in \Rscr$;
 \item All the components $c_m[\vec\psi]$ of the vector $\vec c[\vec \psi]$ satisfy equations \eqref{N2},
 where 
 \be
 \vec \nu=\text{\rm Col}(\nu_1,\dots,\nu_n)=\CH^{-1} \Th^{-1}_d \left(\vec\psi-\vec c[\vec\psi]\right).
 \ee
\end{enumerate}
The solution $\varphi\in L^2(I)$, if exists, is unique and is given by \eqref{sol-phi}.
\end{theorem}

\begin{proof}
The necessary part follows from Lemmas \ref{lem-N1}, \ref{lem-N2}.  Now assume 
that  the vector $\vec c[\vec \psi]$ exists.
To prove the sufficient part, notice that equations
\eqref{mat-eq1} and \eqref{mat-eq3} are equivalent if and only  the condition \eqref{range-2} holds.
Then, if the condition \eqref{range-2} holds, it is sufficient to show that \eqref{mat-eq3}  has a solution 
$\varphi\in L^2(I)$. Existence of such solution follows from the fact that $\l=1$ is not an eigenvalue of $\wh K$.
That completes our argument.
\end{proof} 

In the particular case $n=2$ we have
\be\label{cn=2}
c_1[\vec\psi]=\frac{\th_{21}} \pi \int_{I_2} \frac{\G_{22}(x)\nu_2(x)dx}{R_1(x)},~~~~~
c_2[\vec\psi]=\frac{\th_{12}} \pi \int_{I_1} \frac{\G_{11}(x)\nu_1(x)dx}{R_2(x)}.
\ee

\bc\label{cor-c=0}
In the particular case when $\vec\psi\in\Rscr$ the equations \eqref{N2} from Theorem \ref{theo-main} become
\be\label{eqc=0}
\sum_{k\neq m}\int_{I_k}    \le[\Th^{-1}_{d}\Th_o\Th^{-1}_d
R^{-1}\Gamma^{-1}\ri]_{km}\CH^{-1}_k[\psi_k]dx =0  
\ee
for all $m=1,\dots,n$.
\ec

\br \label{rem-L1}
In the case $R^{-1}\vec \psi \in L^1(I)$, according to \eqref{inv_H} and \eqref{H-inv[1]}, 
the equations \eqref{N2} from Theorem \ref{theo-main} become
\be\label{eqL1}
i\int_{I_m} \frac{\psi_m}{R_{m+}}dx+{\th_{mm}}\sum_{k\neq m}\int_{I_k}    \le[R\Th^{-1}_d\Th_o\Th^{-1}_d
R^{-1}\Gamma^{-1}\ri]_{km}\CH_k\le[\frac{\psi_k}{R_k}\ri]dx  =0
\ee
for all $m=1,\dots,n$.
\er

The 
 range condition \eqref{N2} can be viewed as a null-space of some unbounded linear functional in $L^2(I)$ with an everywhere dense domain.  
 It contains  three main components: the linear map $\vec\psi\to   c_m[\vec\psi]$, $n$ linear maps $\psi_j \to\nu_j$, $j=1,\dots,n$, and the sum of integrals with the weights $[\cdot]_{km}$. The first  map is clearly unbounded. As we show in Lemma \ref{rem-cont}, the weights $[\cdot]_{km}$ in \eqref{N2} are functions analytic in $x$ at least continuous in all $\al_j, \bt_j$ when $x\in I_k$. This means that the $n$-dimensional part does not create any additional complications, and the dependence of the range condition on the endpoints becomes essentially the same as in the one-interval case. The latter is outside the scope of this paper.

\bl\label{cont-Gam}
The solution $\G(z)$ to the RHP \ref{RHPGamma} locally analytically depend  on $\a_j,\bt_j$, $j=1,\dots,n$ for any $z\in \bar\C$ except the endpoints
$\a_j,\bt_j$, $j=1,\dots,n$, where it may have the square root $\sqrt{z-\a}$ type singularities. The same statement is true for  $\G^{-1}(z)$.
 \el
 \begin{proof}
 To show that 
%
 $\G(z)$ depends analytically on the endpoints, we define ``local solutions'' near the intervals. For each of the interval $I_\ell$,
 define 
 \be
 P_\ell (z):= \1 + {2}\vec e_\ell\int_{I_\ell }  \frac { \le[ \frac{\theta_{\ell 1 } R_{\ell +}(w)}{\theta_{11} R_1(w)} , \dots,0,\dots \frac{\theta_{\ell n} R_{\ell +}(w)}{\theta_{nn} R_n(w)} \ri]dw}{(w-z)2\pi i},
 \ee 
 where 
 the $0$ in the integrand is in the $\ell$-th position. 
 By the Plemelj--Sokhotski formula if follows that 
 \be
 P_{\ell+} (z)  = P_{\ell- }(z) V(z,1),\ \ \ \ \ z\in I_\ell, 
 \ee
 where $V$ is defined by \eqref{jumpG}. Indeed,
  \be
 P_{\ell+} (z)  - P_{\ell- }(z)={2}\vec e_\ell \le[ \frac{\theta_{\ell 1 } R_{\ell +}(z)}{\theta_{11} R_1(z)} , \dots,0,\dots \frac{\theta_{\ell n} R_{\ell +}(z)}{\theta_{nn} R_n(z)} \ri]= \vec e_\ell \le[ \frac{\theta_{\ell 1 } R_{\ell +}(z)}{\theta_{11} R_1(z)} , \dots,0,\dots \frac{\theta_{\ell n} R_{\ell +}(z)}{\theta_{nn} R_n(z)} \ri]P_{\ell -}(z)
 \ee
 Of course $P_\ell $ fails to solve the jump condition
 \eqref{rhpG_mat} on the remaining intervals $I_k,\ k\neq \ell$. Note also that $\det P_\ell (z) \equiv 1$ and hence local solutions are analytically invertible.

Let now $\mathbb D_\ell,\ \  \ell=1,\dots, n$ be mututally disjoint open disks (or regions) such that $I_\ell \subset \mathbb D_\ell$.
We also observe that the local solution $P_\ell(z)$ depends analytically on the endpoints $\a_j,\bt_j$, $j=1,\dots,n$, for any $z\in\mathbb D_\ell,$ except at the endpoints $z=\a_\ell$ and $z=\bt_\ell$ of $I_\ell$,
 where it may have the square root $\sqrt{z-\a}$ type singularities. 
Define 
\be
Q(z):= \le\{
\begin{array}{cl}
\G(z) & z\not\in \bigcup _\ell \mathbb D_\ell\\
\G(z) P_\ell^{-1}(z) & z\in \mathbb D_\ell, \qquad \ell=1,\dots, n.
\end{array}
\ri.
\ee
This new matrix $Q(z)$ has only jumps on the boundaries $\pa \mathbb D_\ell$  and it satisfies the RHP
\be
\label {RHP_R}
Q_+ (z)= Q_-(z) P_{\ell}(z) ,\qquad  z\in \pa\mathbb D_\ell\qquad  {\rm and}~~ 
Q(\infty)=\1,
\ee
where the orientation of the boundary is clockwise. If we now move slightly the endpoints (while keeping the disks $\mathbb D_\ell$ fixed), the jump-matrices of the RHP \eqref{RHP_R} change analytically; by the analytic Fredholm theorem, so does the solution $Q(z)$.
This, in turn implies the statement of the lemma for $\G(z)$. Since $\det \G (z) \equiv 1$, the same statement is true for $\G^{-1}(z)$. 

Alternatively, we can prove the analyticity of $Q(z)$ by differentiating both sides of \eqref{RHP_R} in $\pa=\frac{\pa}{\pa\a}$, where 
$\a$ denotes one of the endpoints. Then the differentiated  RHP \eqref{RHP_R} becomes
\be
\label {RHP_R_diff}
\pa Q_+ (z)= \pa Q_-(z) P_{\ell}(z)+  Q_-(z) \pa P_{\ell}(z),\qquad  z\in \pa\mathbb D_\ell\qquad  {\rm and}~~ 
Q(\infty)=0,
\ee
which is satisfied by
\be\label{diff-Q}
\pa Q= \sum _{\ell=1}^nC_{\pa\mathbb D_\ell} (Q_-\pa  P_{\ell} Q_+^{-1})Q=\sum _{\ell=1}^nC_{\pa\mathbb D_\ell} (\G\pa  P_{\ell} \G^{-1})Q,
\ee
where $C_{\g}$ denotes the Cauchy operator over the oriented contour $\g$ and we used the fact that $ P_{\ell}^{-1}\pa  P_{\ell}=\pa  P_{\ell}$.
(Note that $\G\pa  P_{\ell} \G^{-1}$ is smooth (analytic) on $\pa\mathbb D_\ell$ for any $\ell$). Thus, $Q(z)$ is analytic in the endpoints 
for all $z\in \bar\C$.
\end{proof}

\bl\label{rem-cont}
Let us fix some index $m\leq n$.
The weights $\le[\Th_o
\Th^{-1}_d
R^{-1}(x)\Gamma^{-1}(x)\ri]_{km}$ from \eqref{N2} are analytic in $z$ on $I_k$ and continuous in all
the points $\a_j,\bt_j$, $j=1,\dots,n$ for all $z\in I_k$.
 \el
 \begin{proof}
 Note that the $k$th row of the matrix $\Th_o\Th^{-1}_dR^{-1}$ contains all the radicals $R_j$ in the denominator except for 
 $R_k$, since ${\rm diag}\Th_o=0$. Then, according to Remark \ref{rem-beh-of -col}, the functions  
 $\le[\Th_o
\Th^{-1}_d
R^{-1}(x)\Gamma^{-1}_\pm(x)\ri]_{km}$ are analytic on the interior of $I_k$. But since these functions coincide on $I_k$, see the comment
under \eqref{N2}, we conclude that $\le[\Th_o
\Th^{-1}_d
R^{-1}(x)\Gamma^{-1}(x)\ri]_{km}$ is analytic in $z$ in some neighborhood of $I_k$. The use of Lemma \ref{cont-Gam}  completes the proof. 
 \end{proof}

\section{Invertibility of the operator $\Id-\wh K$ and uniqueness of solution to \eqref{mat-eq}}\label{sec-2}

In this section we consider the equation 
\be\label{mat-eq-hom}
\chi \Th \CH \vec f = \vec c,
\ee
where $\Th$ is a symmetric positive definite matrix, {and $\vec c$ is a constant vector that may depend on $\vec f$.} Theorem 
\ref{theo-inv}  below shows that the equation \eqref{mat-eq-hom} with any $\vec c$ does not have a nontrivial solution $\vec f\in L^2(I)$.
{Thus, as it was mentioned in the proof of Lemma \ref{lem-specK}, $\l=1$ is not an eigenvalue of  the operator $\Id-\wh K$,
and  the operator $\Id-\wh K$ is invertible. Moreover, choosing  in \eqref{mat-eq-hom} $\vec c=0$, we show the uniqueness of an $L^2$ solution
to \eqref{mat-eq},  provided that such a solution  exists. }

\br \label{rem-diag-1}
Here and in the rest of this section, without any loss of generality, we can assume $\text{diag~}\Th=\1$. Indeed, denote ${D}= {\rm diag}~\Th$. 
Then, because of symmetry and positive definiteness, 
all the diagonal  entries $D$ are positive. Then $\Th=D^\hf\tilde\Th D^\hf$, where $\tilde\Th$ is  positive definite, symmetric
and $\text{diag~}\tilde\Th=\1$.
Changing variables $\vec f \mapsto D^\hf \vec f$,  $\vec c \mapsto D^{-\hf} \vec c$ we obtain \eqref{mat-eq-hom} with
the matrix $\Th$ replaced by $\tilde\Th$.
\er

According to Remark \ref{rem-diag-1}, we have  $\Th_o=\Th -\1$ for the off-diagonal part of $\Th$.

\bl \label{lem-f_j_glob}
If $\vec f\in L^2(I)$ and $\vec f$ solves \eqref{mat-eq-hom}, then 
\be \label{f_j-glob}
\vec f(z)=R(z)\vec g(z),
\ee 
where  each component $g_j(z)$ of $\vec g(z)$ is analytic in $\bar\C\setminus\le(\cup_{k\neq j}I_k\ri)$.
\el

\begin{proof}
Let $f_j(z)$, $~j=1,2\dots, n$, denote the entries of $\vec f(z)$. According to \eqref{mat-eq-hom}, $\chi\Th_o \CH\vec f-\vec c$ must be in the range of $\chi\CH$. So, we can write the $j$-th component
\be\label{fj-eq}
f_j=-\sum_{k\neq j}{\th_{jk}}\CH^{-1}_j\CH_k f_k{+\CH^{-1}_jc_j}
\ee
on $I_j$, $j=1,2\dots,n$. We now use \eqref{H-inv[1]} and Lemma \ref{lem-H-1H} to get
\be\label{fj-eq2}
f_j(z)=\frac{R_j(z)}{\pi i}\sum_{k\neq j}{\th_{jk}}\int_{I_k}\frac{f_k(x)dx}{(x-z)R_j(x)}= 
-iR_j(z)\sum_{k\neq j}\th_{jk}\CH_k\le[\frac{f_k}{R_j}\ri](z)=R_j(z)g_j(z),
\ee
where $z\in I_j$ and  $R_j(z)$ is taken on the positive side (the upper shore) of $I_j$. It follows from \eqref{fj-eq2} that 
$g_j$ is analytic 
 in $\bar\C\setminus\le(\cup_{k\neq j}I_k\ri)$.
 \end{proof}

Pick any $s$, $1/2<s<1$, and consider the Hilbert space norm
\be\label{s-norm}
\Vert f\Vert_s^2:=\int_{\mathbb R} |\tilde f(\xi)|^2(1+|\xi|^2)^s d\xi,\ f\in C_0^{\infty}(\mathbb R),
\ee
where $\tilde f$ is the Fourier transform of $f$:
\be\label{ft}
\tilde f(\xi)=\int_{\mathbb R} f(x)e^{ix\xi}dx.
\ee
Let $C_0^{\infty}(I)$ be the space of smooth functions which vanish at the endpoints of each $I_j$. Define the Hilbert space $\dot H_s(I)$ as the closure of $C_0^{\infty}(I)$ in the following norm
\be\label{final_norm}
\vvvert f\vvvert:=\left(\sum_j \Vert f_j\Vert_s^2\right)^{1/2}.
\ee
Here and in what follows, given a function $f\in \dot H_s(I)$, $f_j$ denotes the restriction of $f$ onto the interval $I_j$ and extended by zero outside $I_j$. Likewise, a collection of $f_j$ (with the appropriate properties) determines a function $f\in \dot H_s(I)$. 

Note two facts. (i) Any function $f$, whose pieces $f_j$ satisfy \eqref{fj-eq2}, belongs to $\dot H_s(I)$. This follows from the fact that due to the $\sqrt{x_+}$-type singularity, the Fourier transform of $f$ satisfies $\tilde f(\xi)=O(\xi^{-3/2})$, $|\xi|\to\infty$; (ii) Any $f\in \dot H_s(I)$ is  continuous (as is known, $H_s(\mathbb R)\subset C(\mathbb R)$, see \cite{hor},  Corollary 7.9.4). 

Consider the system of equations \eqref{mat-eq-hom}, which we write as
\be\label{eq1}
\frac1\pi\sum_k\theta_{jk}\int_{I_k}\frac{f_k(x)}{x-y}dx=c_j, \ y\in I_j,\ 1\leq j\leq n,
\ee
where $f\in\dot H_s(I)$. Observe that (1) the multi-interval Hilbert transform $\CH: \dot H_s(I)\to H_s(\mathbb R)$ is bounded. This follows from the fact that each finite Hilbert transform in \eqref{eq1} is just the multiplication with $-i\text{sgn}(\xi)$ in the Fourier domain; and (2) $H_s(\mathbb R)\subset C(\mathbb R)$. Thus the above equation makes sense pointwise.

\begin{theorem}\label{theo-inv}
If $f\in\dot H_s(I)$ solves the system \eqref{eq1} {with a positive definite symmetric matrix $\Th=(\th_{jk})$} 
for some constants $c_j$, then all $c_j=0$ and $f\equiv0$.
\end{theorem}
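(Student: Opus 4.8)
The plan is to use an energy/quadratic-form argument exploiting the positive definiteness of $\Th$, combined with the structure of $f$ forced by Lemma \ref{lem-f_j_glob}. First I would pair the system \eqref{eq1} with $\vec f$ itself: multiply the $j$-th equation by $\overline{f_j(y)}$, integrate over $I_j$, and sum over $j$. Since $f\in\dot H_s(I)$ is continuous and vanishes at the endpoints, and since the right-hand sides are constants, the terms $\int_{I_j} c_j\overline{f_j(y)}\,dy$ appear on one side; the left-hand side is $\frac1\pi\sum_{j,k}\theta_{jk}\int_{I_j}\int_{I_k}\frac{f_k(x)\overline{f_j(y)}}{x-y}\,dx\,dy$. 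Passing to the Fourier domain, each single finite Hilbert transform acts as multiplication by $-i\,\mathrm{sgn}(\xi)$, so this double sum becomes $\frac{1}{\pi}\int_{\mathbb R}(-i\,\mathrm{sgn}\,\xi)\,\overline{\tilde{\vec f}(\xi)}^{\,t}\,\Th\,\tilde{\vec f}(\xi)\,d\xi$ — a purely imaginary quantity when $\Th$ is real symmetric (because $\mathrm{sgn}\,\xi$ is odd and the quadratic form $\overline{w}^t\Th w$ is conjugated under $\xi\mapsto-\xi$ by reality of $f$). Taking real parts should kill the left-hand side and leave $\mathrm{Re}\sum_j \bar c_j\int_{I_j}f_j = 0$, or with a more careful bookkeeping, directly force relations among the $c_j$.

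The cleaner route is probably this: use Lemma \ref{lem-f_j_glob} to write $\vec f(z)=R(z)\vec g(z)$ with each $g_j$ analytic off $\cup_{k\neq j}I_k$, i.e. analytic across $I_j$ itself, and bounded (from Remark \ref{rem-beh-of -col}-type reasoning) except possibly square-root growth at the other endpoints. Then I would form the scalar function $h(z):=\sum_j$ (something built from $\CH\vec f$ and $\vec f$) and run a contour-integration argument: integrate $\vec f^{\,t}(z)\,\Th\,(\CH\vec f)(z)$-type expressions over a large circle, collapsing onto the cuts $I_j$, and use the Plemelj jump relations together with $\chi\Th\CH\vec f=\vec c$ on each $I_j$. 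The boundary values of $\CH_j f_j$ from above and below on $I_j$ differ by $2i f_j$, and on $I_j$ we know $\sum_k\theta_{jk}\CH_k f_k=\pi c_j$; combining these and using symmetry of $\Th$, the cross-terms should organize into $\int_{\mathbb R}(\text{sgn}\,\xi)\,\overline{\tilde{\vec f}}^{\,t}\Th\tilde{\vec f}\,d\xi$ plus boundary contributions from the constants $c_j$. Positive definiteness of $\Th$ then gives $\overline{w}^t\Th w\geq \lambda_{\min}|w|^2$, so the only way the identity can hold is $\tilde{\vec f}\equiv 0$ on, say, $\xi>0$; since $\vec f$ is real-valued, $\tilde{\vec f}(-\xi)=\overline{\tilde{\vec f}(\xi)}$, hence $\tilde{\vec f}\equiv 0$ everywhere and $f\equiv 0$. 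Feeding $f\equiv 0$ back into \eqref{eq1} immediately forces all $c_j=0$.

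I would carry out the steps in this order: (1) record that each $f_j$ has $\sqrt{\cdot}$ vanishing at its own two endpoints (so $\CH_j f_j$ has no singularities on the interior of $I_j$ and is bounded there), and that the Fourier transform decays like $|\xi|^{-3/2}$, justifying all the integrations by parts and Fubini steps; (2) pair the system against $\vec f$ and transfer to the Fourier side, isolating the real part; (3) show the quadratic-form term is purely imaginary via the reality of $f$ and oddness of $\mathrm{sgn}$, so that its real part vanishes identically, not merely after integration — alternatively run the contour-integral version and read off the same identity; (4) track the constant terms $c_j$: because $f_j$ vanishes at the endpoints of $I_j$ with a half-power, $\int_{I_j} f_j\,dy$ is finite and the pairing with $c_j$ contributes a genuine (possibly nonzero a priori) term, which must therefore also vanish, and a short separate argument (e.g. the $n=1$ known result, or testing against $\mathrm{sgn}$-type functions) upgrades this to $c_j=0$ individually; (5) conclude $\tilde{\vec f}\equiv0$ on a half-line, hence everywhere, hence $f\equiv0$.

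\textbf{Main obstacle.} The delicate point is step (4), disentangling the constants $c_j$ from the quadratic form. The naive pairing gives only one real scalar identity, so it controls $\sum_j \bar c_j \int_{I_j} f_j$ but not each $c_j$ separately; I expect one must either exploit an additional structural fact — perhaps that $R^{-1}\vec f\in L^1$ and invoke Proposition \ref{prop-R3}'s condition $\int_{I_j} f_j/R_j = 0$ componentwise, which is the "finite energy" manifestation here — or reduce to the already-known scalar inversion of $\CH$ on a single interval after the off-diagonal coupling has been shown to vanish. A secondary technical nuisance is making the contour-collapse rigorous given only $f\in\dot H_s$ rather than smooth data, but fact (i) in the paragraph preceding the theorem (the $|\xi|^{-3/2}$ decay) is exactly what is needed to make all the improper integrals absolutely convergent and the manipulations legitimate.
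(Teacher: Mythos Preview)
Your proposal has a genuine gap in the core energy argument. When you pair \eqref{eq1} with $\overline{f_j}$ and pass to the Fourier side, the multiplier is $-i\,\mathrm{sgn}(\xi)$; but for real $f$ and real symmetric $\Th$ the scalar $q(\xi):=\overline{\tilde{\vec f}(\xi)}^{\,t}\Th\,\tilde{\vec f}(\xi)$ is real and \emph{even} in $\xi$ (use $\tilde f(-\xi)=\overline{\tilde f(\xi)}$ and $\Th^t=\Th$). Hence $\int_{\mathbb R}\mathrm{sgn}(\xi)\,q(\xi)\,d\xi=0$ identically, not merely ``purely imaginary''. Positive definiteness of $\Th$ never enters: you are left only with the single scalar relation $\sum_j c_j\int_{I_j} f_j=0$, which does not force $f\equiv0$ nor $c_j=0$. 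Your step~(5), ``conclude $\tilde{\vec f}\equiv0$ on a half-line'', simply does not follow from anything you have.

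The paper fixes exactly this by pairing with $\phi_j'$ for a test function $\phi\in C_0^\infty(I)$ rather than with $f_j$. Two things happen at once: first, the constants disappear immediately because $\int_{I_j}c_j\,\phi_j'\,dy=0$; second, the extra derivative contributes a factor $i\xi$ on the Fourier side, turning the multiplier $\mathrm{sgn}(\xi)$ into $|\xi|$. The resulting bilinear form
\[
J(f,g)=\frac{1}{2\pi}\sum_{j,k}\theta_{jk}\int_{\mathbb R}|\xi|\,\tilde f_k(\xi)\,\overline{\tilde g_j(\xi)}\,d\xi
\]
is now genuinely positive definite on $\dot H_s(I)$ (here the positive definiteness of $\Th$ is finally used), and \eqref{eq1} says precisely that $f$ is a critical point of $J(f)$. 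Strict convexity forces $f\equiv0$, and then \eqref{eq1} gives $c_j=0$. Your contour-integration alternative can in principle be pushed through (form an entire quadratic expression in the Cauchy transforms and use Liouville), but as written it is too vague to constitute a proof, and in particular it does not show how to handle the constants $c_j$.
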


\begin{proof}
Pick any $\phi\in C_0^{\infty}(I)$, multiply the $j$-th equation by $\phi_j'$, integrate over $I_j$, and add the results. Clearly, we get
\be\label{eq1-phi}
\sum_j \int_{I_j}\phi_j'(y)\left\{\frac1\pi\sum_k\theta_{jk}\int_{I_k}\frac{f_k(x)}{x-y}dx\right\}dy=0.
\ee

In view of \eqref{eq1-phi}, we define the following bilinear form:
\be\label{eq3}
\begin{split}
J(f,g):&=\sum_j \int_{I_j}g_j'(y)\left\{\frac1\pi\sum_k\theta_{jk}\int_{I_k}\frac{f_k(x)}{x-y}dx\right\}dy\\
&=\frac1\pi\sum_{j,k}\theta_{jk}\int_{I_j}\int_{I_k} g_j'(y)f_k'(x) \ln\le(\frac 1 {|x-y|}\ri)dxdy,\ f,g\in C_0^{\infty}(I).
\end{split}
\ee
Recall that $f_j$'s are the pieces that make up $f$ (and the same for $g$). With some abuse of notation, we write $J(f):=J(f,f)$. First, $J(f,g)=J(g,f)$. Also, it is easy to see that $J$ is continuous, positive definite, and strictly convex on $\dot H_s(I)$. Indeed, extending $f_k$ and $g_j$ by zero outside $I_k$ and $I_j$, respectively, and using that the Fourier transform preserves dot products, we get from \eqref{eq3}
\be\label{eq4}\begin{split}
J(f,g)&=\frac1{2\pi}\sum_{j,k}\theta_{jk}\int_{\mathbb R}[-i\text{sgn}(\xi)\tilde f_k(\xi)] \overline{[-i\xi\tilde g_j(\xi)]}d\xi\\
&=\frac1{2\pi}\sum_{j,k}\theta_{jk}\int_{\mathbb R}|\xi|\tilde f_k(\xi)\overline{\tilde g_j(\xi)}d\xi.
\end{split}
\ee
Since $s > 1/2$, from \eqref{s-norm} and \eqref{final_norm} we have
\be\label{J-ineq}\begin{split}
|J(f,g)| &\leq \frac1{2\pi}\sum_{j,k}|\theta_{jk}|\int_{\mathbb R}\sqrt{|\xi|}|\tilde f_k(\xi)|\cdot\sqrt{|\xi|}|\tilde g_j(\xi)|d\xi\\
&\leq \frac1{2\pi}\sum_{j,k}|\theta_{jk}|\left(\int_{\mathbb R}|\xi||\tilde f_k(\xi)|^2d\xi\right)^{1/2} \left(\int_{\mathbb R} |\xi||\tilde g_j(\xi)|^2d\xi\right)^{1/2}\\
&\leq c \sum_{j,k} \Vert f_k\Vert_s \Vert g_j\Vert_s \leq cn\, \vvvert f \vvvert \cdot \vvvert g \vvvert
\end{split}
\ee
for some $c>0$. Thus, $J$ is continuous and extends to all of $\dot H_s(I)\times\dot H_s(I)$.

Considering $\tilde f$ as a vector, $\tilde f(\xi)=(\tilde f_1(\xi),\dots,\tilde f_n(\xi))$, we have
\be\label{pos-def}
J(f)=\frac1{2\pi}\int_{\mathbb R}|\xi|(\Theta \tilde f(\xi),\tilde f(\xi))d\xi.
\ee
Here $(\cdot,\cdot)$ denotes the usual dot product in $\mathbb C^n$. The remaining assertions, convexity and positive definiteness, are now obvious because $\Theta$ is positive-definite. In particular, \eqref{pos-def} proves that $J(f+tg)$ is a parabola with respect to $t$ for any $g\in \dot H_s(I)$, $g\not\equiv0$.

Consider the variation $J(f+t\phi)$, $f,\phi\in \dot H_s(I)$. We have (cf. \eqref{eq3}):
\be\begin{split}\label{variation}
\frac{d}{dt}\left.J(f+t\phi)\right|_{t=0}
=2J(f,\phi)=2\sum_j \int_{I_j}\phi_j'(y)\left\{\frac1\pi\sum_k\theta_{jk}\int_{I_k}\frac{f_k(x)}{x-y}dx\right\}dy.
\end{split}
\ee
Suppose now that $f$ satisfies \eqref{eq1} and, consequently, $f$ satisfies \eqref{eq1-phi} for all $\phi\in C_0^{\infty}(I)$. Then the right-hand side of \eqref{variation} equals zero, and $f$ is a critical point of $J(f)$. By convexity, the only critical point is $f\equiv0$. Hence, all $c_j$ are zero, and any solution $f\in \dot H_s(I)$ of \eqref{eq1} is necessarily trivial.
\end{proof}

  \section{Other cases for the  matrix $\Th$}\label{sec-other}
  
  We first observe that in the previous sections the positive-definiteness of matrix $\Th$ was used only in two places: (i) 
  to show that $\Th_d$ is invertible, and (ii) to show that  $\lambda =1$ is not an eigenvalue of   the  operator $\wh  K$. Thus, we obtain the following corollary.
  
\bc  \label{cor-not-pos-def}
   If matrix $\Th$ is symmetric with invertible diagonal  part $\Th_d$,  then Theorem \ref{theo-inv-pos-def} (inversion formula) and
  Theorem \ref{theo-main} (range conditions) are still valid provided  that   $\lambda =1$ is not an eigenvalue of   the  operator $\wh  K$
  defined  by \eqref{operKn}.
 \ec
 
 Now, the symmetry of $\Th$ is used only in the calculation of the range condition \eqref{N2}, namely in the transition from \eqref{cont-int} to \eqref{the-formula}. Thus, we can state 
 the following corollary. 
 
 \bc  \label{cor-not-sym}
 If $\Th_d={\rm diag} \Th$ is invertible and if $\lambda =1$ is not an eigenvalue of   the  operator $\wh  K$
  defined  by \eqref{operKn}, then Theorem \ref{theo-inv-pos-def}  and
  Theorem \ref{theo-main}  are still valid provided  that the range condition 
  \eqref{N2} in   Theorem \ref{theo-main} is replaced
  with \eqref{J12}, where $\vec f,\vec g$ are defined by \eqref{f,g}.
 \ec

{Next we consider} the remaining case of a non-invertible matrix $\Th_d$. We start with an example of $n=2$. In this example we assume $\th_{11}=0$,
 but exclude the trivial case of $\Th=0$.
{T}he first equation of \eqref{mat-eq} (on $I_1$) becomes $\CH_2\varphi_2=\frac{\psi_1}{\th_{12}}$, provided that $\th_{12}\neq 0$.
 Since $\CH_2\varphi_2$ is analytic in $\bar \C\setminus I_2$, we conclude that $\psi_1$ can be analytically continued in  $\bar \C\setminus I_2$, 
 $\psi_1(\infty)=0$, and the jump $\D_2\psi_1$ of $\psi_1$ over $I_2$ must be in $L^2(I_2)$. These are  the range conditions for $\psi_1$.
 Moreover, according to the Plemelj-Sokhotski formula, $\varphi_2=\frac{\D_2\psi_1}{2i\th_{12}}$.
 
 The second equation (on $I_2$) now has the form 
 \be\label{eq-phi1}
 \th_{21}\CH_1\varphi_1=\psi_2-\frac{\th_{22}}{2i\th_{12}}\CH_2(\D_2\psi_1)=:\tilde\psi_2.
 \ee
 Assuming $\th_{21}\neq 0$, we can use the previous argument to obtain the range condition for the adjusted right-hand side $\tilde\psi_2$: 
 the function $\tilde\psi_2$  can be analytically continued  in $\bar\C\setminus I_1$, it 
 attains zero at infinity
 and also has an $L^2(I_1)$ jump on $I_1$. In this case $\varphi_1=\frac{1}{2i\th_{21}}\D_2\le[ \tilde\psi_2 \ri]$.
 
 If $\th_{21}=0$, then $\varphi_1$ can be any $L^2(I_1)$ function, and $\psi_2=\frac{\th_{22}}{2i\th_{12}}\CH_2(\D_2\psi_1)$ on $I_2$ becomes
 the range condition for $\psi_2$. 
 
 Consider the remaining case of $\th_{12}=0$. Now the first equation (on $I_1$) becomes trivial.
 Then $\psi_1\equiv 0$ is the range condition for $\psi_1$. The second equation (on $I_2$) becomes
 \be\label{eq-phi2}
 \th_{22}\CH_2\varphi_2= \psi_2-\th_{21}\CH_1\varphi_1.
 \ee
 If  $\th_{22}=0$, then  $\th_{21}\neq 0$, and we can repeat our previous arguments to obtain  $\varphi_1=\frac{\D_1\psi_2}{2i\th_{21}}$, which leads to the range conditions for $\psi_2$: it can be analytically continued in  $\bar \C\setminus I_1$, 
 $\psi_2(\infty)=0$ and the jump $\D_1\psi_2$ of $\psi_2$ over $I_1$ must be in $L^2(I_2)$.
 
 Otherwise, if  $\th_{22}\neq 0$, the right hand side of \eqref{eq-phi2} must be  in the range of $\CH_2$ {restricted to $I_2$}. So, the range conditions for $\psi_2$ are: (i) there exists $c\in \R$ such that $\psi_2-c$
 is in the range of the finite Hilbert transform $\chi_2\CH_2: L^2(I_2)\ra L^2(I_2)$, and (ii) 
 $c=-\frac{\th_{21}}{\pi i}\int_{I_2}R_2^{-1}\CH_1\varphi_1 dz$ for some $\varphi_1\in  L^2(I_1)$. 
 The latter condition can always be satisfied since we are free to choose any  $\varphi_1\in L^2(I_1)$; thus
 the range condition for $\psi_2$  consists only of the condition (i). If it is satisfied, then we solve \eqref{eq-phi2} for 
 $\varphi_2$ and, thus, obtain $\vec \varphi$.
 Of course,  this solution is not unique, as the condition (ii) does not determine $\varphi_1$ uniquely. 
 That concludes the case of a $2\times 2$ matrix $\Th$ with a non-invertible diagonal. 
 
Based on the example of  a $2\times 2$ matrix $\Th$ with a non-invertible diagonal we can briefly outline the general $n\times n$ case. We say that {\it a row $k$ is connected with a row  $j$} if there is a set of distinct  indices $j,m_1,\dots,m_p,k$ such that $\th_{jm_1}\th_{m_1m_2}\cdots \th_{m_pk}\neq 0$. A row $k$ is called  {\it degenerate} if either $\th_{kk}=0$ or if row  $k$ is connected with some row $j$  such that $\th_{jj}=0$. A matrix $\Th$ with non-invertible $\Th_d$ is called {\it irreducible} if all the rows $1,\dots,n$ are degenerate; otherwise, $\Th$ is called {\it reducible}. In particular, in the above $2\times 2$ example with $\th_{11}=0$, the matrix $\Th$ is irreducible if either $\th_{12}\neq 0$ or  $\th_{22}=0$;  otherwise $\Th$ is reducible.
 
 In the case of an irreducible matrix $\Th$, all the components $\varphi_j$ of the solution vector $\vec \varphi$ can be found as jumps
 over $I_j$ of the analytic continuations of the corresponding adjusted right hand 
 sides, see the example of a  $2\times 2$ matrix $\Th$ described above. {The function $\tilde \psi_2$  in \eqref{eq-phi1} is an example of an adjusted right-hand side.}
 The range conditions over $I_j$ in this case consist of: (i) certain analytic properties of the adjusted $\tilde\psi_j$, and (ii) if some $\varphi_j$ can be represented as  a jump of different analytic continuations,  then all such analytic continuations should have the same jumps over  $I_j$.
 
 In the case when the matrix $\Theta$ is reducible, we can bring it to a block-triangular form by interchanging rows and  interchanging  columns.
 Such moves do not change the degeneracy of any particular row. 
 Let us move all the degenerate rows and the corresponding  columns of $\Th$ 
 to the positions $1,\cdots,m$, where $m<n$. 
 Then we obtain a lower triangular block matrix 
\be
\tilde\Th=\le[\begin{array}{cc}
 A & 0\\
B & C\\
\end{array}\ri]
\ee
where the $m\times m$ matrix $A$ is irreducible, and the $(n-m)\times(n-m)$ matrix $C$ has an invertible ${\rm diag} C$. Indeed, consider a column
 $\Th_j$ of $\Th$ with $j>m$. If any $\th_{kj}\neq 0$ with $k\leq m$, then the row $j$ is degenerate, which contradicts the assumption.
 Finding $\varphi_1,\dots, \varphi_m$ by solving  the first  $m$ equations with the corresponding irreducible  matrix $A$,
 we reduce the size of the original problem from $n\times n$ to $(n-m)\times (n-m)$. As it was mentioned above, the matrix $C$ of 
 the reduced system  has  invertible ${\rm diag} C$, so it satisfies the requirements of Corollary \ref{cor-not-sym}. Note that
 the right hand side of the reduced system depends on  the already obtained  $\varphi_1,\dots, \varphi_m$.

\section{The case of uniform interaction matrix $\Th$}\label{sec-unif}


In the particular case when all entries $\theta_{jk}=1$, the matrix  $\Th=(\th_{jk})$ is not positive definite, so that the invertibility
of the operator $\Id-\wh K$, given by \eqref{operKn}, cannot be guaranteed using the methods of the previous sections.  However, this problem can be reduced 
 to the inversion of the multi-interval finite Hilbert transform:
\begin{equation}\label{hilb-eq}
(\CH f)(z)=g(z),\,z\in I,\ f,g\in L^2(I),
\end{equation}
where
\begin{equation}\label{hilb-orig}
\CH:\,L^2(I)\to L^2(I),\ (\CH f)(z)=\frac1\pi \int_{I}\frac{f(x)}{x-z}dx.
\end{equation}

{
\br \label{new-H}
In this section the operator $\CH$ is defined by \eqref{hilb-orig} instead of \eqref{def-Hj}, as is the case in the rest of the paper. 
{We also use here the notations $f$ and $g$ instead of $\varphi$ and $\psi$, respectively.}     
\er}

Define
\be\label{oepols}
\begin{split}
\bto(z)&= \prod_{j=1}^{n}(z-\a_{j}),\quad 
\bte(z)= \prod_{j=1}^{n}(z-\bt_{j}), \quad \bet(z)=\bte(z)/\bto(z),\\ 
\phi(z)&=\Re \ln \bet(z).
\end{split}
\ee
Note that 
\be\label{arg-ln}
\arg \bet(z)=\pi,\  \ln \bet(z)=\phi(z)+i\pi,\ z\in I. 
\ee
Using \eqref{arg-ln}, we get from \eqref{oepols}:
\be\label{der-2}
\phi'(x)=\frac{Q(x)}{\bto(x)\bte(x)},\quad
Q(x):=\bte'(x)\bto(x)-\bte(x)\bto'(x).
\ee
It is known that $Q(x)$ is positive and bounded away from zero on $I$ (cf. \cite{kbt18}) 
{and, therefore, $\phi(x)$ is monotonic  and, thus, invertible on each interval $I_j$. Moreover,
it is straightforward to see that the range of $\phi(x)$  on each interval $I_j$ is $\R$. } 
Now we calculate using \eqref{arg-ln} again:
\be \label{cos_sinh_1}
\begin{split}
2\sinh\le( \frac{\phi(x)-\phi(z)}2\ri)=\frac{(x-z)\sum_{i,j=1}^n B_{ij}z^{i-1}x^{j-1}}
{\sqrt{\prod_{j=1}^{n}(x-\a_j)(x-\bt_j)(z-\a_j)(z-\bt_j)}}, 
\end{split}
\ee
where $B:=B(\bte,\bto)=(B_{ij})$ is the B\'{e}zout matrix of the polynomials $\bte(z),\bto(z)$.
Note that $\prod_{j=1}^{n}(x-\a_j)(x-\bt_j)(z-\a_j)(z-\bt_j)>0$
for any $z,x$ in the interior of $I$, and the square root in \eqref{cos_sinh_1} is computed according to the rule
\be\label{sgnsgn}
\sqrt{\prod_{j=1}^{n}(x-\a_j)(x-\bt_j)(z-\a_j)(z-\bt_j)}=-\text{sgn} \bto(x)\text{sgn} \bto(z)\prod_{j=1}^{n}|(x-\a_j)(x-\bt_j)(z-\a_j)(z-\bt_j)|^\hf.
\ee
Since $B$ is symmetric,
\be\label{bezout}
B=\Omega^t \text{diag}(\rho_1,\dots,\rho_n)\Omega,
\ee
for an orthogonal matrix $\Omega$. Then 
\be \label{bas_poly11}
\sum_{i,j=1}^n B_{ij}z^{i-1}x^{j-1}=\vec z_n^t B \vec x_n=\vec z_n^t \Omega^t \text{diag}(\rho_1,\dots,\rho_n)\Omega\vec x_n=\sum_{j=1}^n \rho_j P_j(z)P_j(x),
\ee
where $\vec z_n^t=(1,z,\dots z^{n-1}), \vec x_n^t=(1,x,\dots x^{n-1})$ and $(P_1(z),\dots,P_n(z))=\vec z_n^t \Omega^t$.

Introduce the isometry of the two spaces
\be\label{tr-three}
\begin{split}
&T:\ L^2(I)\to L_n^2(\mathbb R),\\
&\check\bff (t):=(T f)(t):=\sqrt2\left(\left.\frac{{\text{sgn}(\bto(x))}f(x)}{\sqrt{|\phi'(x)|}}\right|_{x=\phi_1^{-1}(2t)},\dots,\left.\frac{{\text{sgn}(\bto(x))}f(x)}{\sqrt{|\phi'(x)|}}\right|_{x=\phi_n^{-1}(2t)}\right),
\end{split}
\ee
where $L_n^2(\mathbb R)$ is the direct sum of $n$ copies of $L^2(\mathbb R)$, $L^2_n(\mathbb R) =\oplus_{j=1}^{n} L^2(\mathbb R)$. Here we set $\Vert \check\bff\Vert^2=\Vert\check f_1\Vert^2+\dots+\Vert\check f_n\Vert^2$, where $\check\bff=(\check f_1,\dots,\check f_n)\in L_n^2(\mathbb R)$ and $\Vert\check f_m\Vert$ is the conventional $L^2(I_m)$ norm. Also, in \eqref{tr-three}, $\phi_k^{-1}$ is the inverse of $\phi(x)$ on the $k$-th interval $(\a_k,\bt_{k})$.
Changing variables in the definition of $\CH$ gives
\be\label{change-1}
\begin{split}
(T\CH T^{-1}\check \bff)_m(s)
&=\frac{\text{sgn}(\bto(z_m))}{\pi}\sqrt{\frac2{|\phi'(z_m)|}}\sum_{k=1}^n \int_{\mathbb R}\frac{\text{sgn}(\bto(x_k))\check f_k(t)}{\sqrt{|\phi'(x_k)|/2}\,(x_k-z_m)}dt\\
&=\frac{2\text{sgn}(\bto(z_m))}{\pi}\sum_{k=1}^n \int_{\mathbb R}\frac{\text{sgn}(\bto(x_k))\check f_k(t)}{\sqrt{|\phi'(x_k)||\phi'(z_m)|}\,(x_k-z_m)}dt,\\
x_k:&=\phi_k^{-1}(2t),\,z_m:=\phi_m^{-1}(2s).
\end{split}
\ee
Combining \eqref{change-1},  \eqref{der-2}, \eqref{cos_sinh_1}, and \eqref{bas_poly11} we find
\be\label{xmz}
\frac{\text{sgn}(\bto(x_k)\bto(z_m))}{\sqrt{|\phi'(x_k)||\phi'(z_m)|}\,(x_k-z_m)}=\frac{-1}{2\sinh(t-s)}
\sum_{j=1}^n \frac{\rho_j P_j(x_k)P_j(z_m)}{\sqrt{Q(x_k)Q(z_m)}}.
\ee
Define the matrix function
\be\label{one-matr}
\begin{split}
\CM:&=\{M_{jk}(t)\},\ M_{jk}(t):=P_j(x_k)\sqrt{\frac{\rho_j}
{Q(x_k)}},\ x_k:=\phi_k^{-1}(2t).
\end{split}
\ee
It is shown in \cite{kbt18} that $\{M_{jk}(t)\}$ is an orthogonal matrix for all $t\in \mathbb R$. Substituting \eqref{xmz} and \eqref{one-matr} into \eqref{change-1} gives
\be\label{change-simple}
\begin{split}
(T\CH T^{-1}\check \bff)_m(s)
&=\sum_{j=1}^n  M_{jm}(s) \sum_{k=1}^n \int_{\mathbb R}\frac{M_{jk}(t) \check f_k(t)}{\pi\sinh(s-t)}dt.
\end{split}
\ee
In compact form, \eqref{change-simple} can be written as follows
\be\label{change-alt}
T\CH T^{-1}\check \bff = \CM^T K \CM\check \bff,
\ee
where $K$ is the operator of component-wise convolution with $(\pi\sinh(t))^{-1}$. 

Let $\CF:\, L_n^2(\mathbb R)\to L_n^2(\mathbb R)$ denote the map consisting of $n$ component-wise one-dimensional Fourier transforms. Using the integral 2.5.46.2 in \cite{pbm1}:
\be\label{sh-int}
\int_0^\infty \frac{\sin(\la x)}{\sinh x}dx=\frac{\pi}{2}\tanh(\pi\la/2),
\ee
we get 
\be\label{K-four}
K=\CF^{-1}(i\tanh(\pi\la/2)\Id)\CF,
\ee
{where $\la$ is the spectral (Fourier) variable. }
Therefore, \eqref{change-simple} gives
\be\label{H-final}
\CH f = (\CF\CM T)^{-1} (i\tanh(\pi\la/2)\Id) (\CF\CM T)f.
\ee
Recalling that the operators $\CF$, $\CM$, and $T$ are all isometries, we immediately obtain $n$ conditions for the right-hand side of \eqref{hilb-eq} to be in the range of the multi-interval finite Hilbert transform and the inversion formula.
\begin{theorem} \label{theo-unif}
A function $g\in L^2(I)$ is in the range of the multi-interval finite Hilbert transform {$\CH$ given by \eqref{hilb-orig}} if and only if
\be\label{mifinite Hilbert transform-range}
\frac1{\la} (\CF\CM T g)_m\in L_{\rm{loc}}^2,\ m=1,2,\dots n,
\ee
in a neighborhood of $\la=0$. If $g\in L^2(I)$ is in the range of {$\CH$, then the inversion formula is} given by
\be\label{FHT-inv}
\CH^{-1}g=(\CF\CM T)^{-1} \left(\frac1{i\tanh(\la\pi/2)}\Id\right) (\CF\CM T)g.
\ee
\end{theorem}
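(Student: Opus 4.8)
The plan is to read off the theorem directly from the factorization \eqref{H-final}, which already expresses the multi-interval finite Hilbert transform as a conjugate of the multiplication operator $i\tanh(\pi\la/2)\Id$ by the composite isometry $\CF\CM T$. First I would record the elementary facts that must be invoked: $T$ is an isometry of $L^2(I)$ onto $L^2_n(\mathbb R)$ by construction \eqref{tr-three} (here the monotonicity of $\phi$ on each $I_j$ and the fact that its range is all of $\R$, noted after \eqref{der-2}, are what make the change of variables a bijection); $\CM$ is an isometry because $\{M_{jk}(t)\}$ is orthogonal for every $t$ (cited from \cite{kbt18} after \eqref{one-matr}); and $\CF$ is an isometry by Plancherel. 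Hence $\CF\CM T$ is a unitary operator $L^2(I)\to L^2_n(\mathbb R)$, and its inverse appearing in \eqref{FHT-inv} is genuinely a bounded operator.

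Next I would use \eqref{H-final} together with unitarity to reduce both assertions to statements about the multiplication operator $m_\la:=i\tanh(\pi\la/2)$ on $L^2_n(\mathbb R)$. For the range condition: $g\in L^2(I)$ lies in the range of $\CH$ if and only if $(\CF\CM T)g$ lies in the range of multiplication by $m_\la$, i.e.\ if and only if the vector $(\CF\CM T)g$, divided componentwise by $m_\la$, is again in $L^2_n(\mathbb R)$. Since $|\tanh(\pi\la/2)|$ is bounded above and below away from zero outside any neighborhood of $\la=0$, the only possible obstruction is near $\la=0$, where $m_\la\sim i\pi\la/2$; thus the division is in $L^2$ iff $\frac1\la(\CF\CM Tg)_m\in L^2_{\rm loc}$ near $\la=0$ for each $m$, which is exactly \eqref{mifinite Hilbert transform-range}. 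For the inversion formula: when $g$ is in the range, the unique $f$ with $\CH f=g$ is obtained by applying $(\CF\CM T)^{-1}$ to the componentwise product of $(\CF\CM T)g$ with $1/m_\la=1/(i\tanh(\pi\la/2))$, which is precisely \eqref{FHT-inv}; uniqueness is the injectivity of $\CH$, equivalently the fact that $m_\la\neq 0$ for $\la\neq 0$ so multiplication by $m_\la$ is injective.

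I would also insert, for completeness, the short verification of the operator identity \eqref{H-final} itself: combine \eqref{change-alt} (the conjugation $T\CH T^{-1}=\CM^TK\CM$) with the Fourier diagonalization \eqref{K-four} of the convolution operator $K$, obtained from the integral \eqref{sh-int}, to get $T\CH T^{-1}=\CM^T\CF^{-1}(i\tanh(\pi\la/2)\Id)\CF\CM$, and then conjugate back by $T^{-1}$; since $\CM$ is orthogonal, $\CM^T\CF^{-1}=(\CF\CM)^{-1}$, giving \eqref{H-final}. None of this is a serious obstacle — the substantive analytic work (the sinh-identity \eqref{cos_sinh_1}, the orthogonality of $\CM$, the positivity of $Q$) has already been carried out before the theorem statement. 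The only point requiring a little care, and the one I would state most carefully, is the local behavior at $\la=0$: one must note that $\tanh$ vanishes to exactly first order there so that "in the range" is equivalent to the stated $L^2_{\rm loc}$ condition and not to something stronger, and that away from $\la=0$ both $m_\la$ and $1/m_\la$ are bounded so they cause no integrability problems. That is the crux of why the range is a proper (but explicitly described) subspace, mirroring the classical single-interval statement \eqref{R-1}.
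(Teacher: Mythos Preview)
Your proposal is correct and takes essentially the same approach as the paper: the paper's proof consists precisely of the derivation leading to \eqref{H-final} together with the one-line observation that $\CF$, $\CM$, and $T$ are isometries, from which the range condition and inversion formula follow immediately. Your write-up is in fact more explicit than the paper's, spelling out the reduction to the multiplication operator $i\tanh(\pi\la/2)$ and the first-order vanishing at $\la=0$, but the underlying argument is identical.
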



\begin{thebibliography}{10}




\bibitem {BKT13} M. Bertola, A. Katsevich and A. Tovbis, Inversion formula for the $\cosh$-transform in SPECT-tomography,  
Proceedings of the AMS, {\bf 141} (2013), 2703-2718 

\bibitem {BKT16} M. Bertola, A. Katsevich and A. Tovbis, Singular value decomposition of a finite Hilbert transform
defined on several intervals and the interior problem of
tomography: the Riemann-Hilbert problem approach, Comm. Pure and Appl. Math., {\bf 69} no 3 (2016),  
407–477 (DOI: 10.1002/cpa.21547)


\bibitem {BGG} A. Borodin, V. Gorin, and A.  Guionnet,  Gaussian asymptotics of discrete $\bt$-ensembles,
 Publ.math.IHES (2017) 125: 1. https://doi.org/10.1007/s10240-016-0085-5

     


\bibitem{IIKS}
Its, A.~R.; Izergin, A.~G.; Korepin, V.~E.; Slavnov, N.~A.: Differential
  equations for quantum correlation functions, in {Proceedings of the
  {C}onference on {Y}ang-{B}axter {E}quations, {C}onformal {I}nvariance and
  {I}ntegrability in {S}tatistical {M}echanics and {F}ield {T}heory}, vol. {\bf 4},
  1990 pp. 1003--1037

  
\bibitem{Horm}  
L. Hormander,
    The Analysis of Linear Partial Differential Operators, {Vol I},
    Springer Verlag,
   New York,
    1983  
 
\bibitem {kbt18}  A. Katsevich, M. Bertola and A. Tovbis, Spectral analysis and decomposition of normal operators related with the multi-interval Finite Hilbert Transform, Contemporary Mathematics, accepted


\bibitem{koppinc59}    W. Koppelman and J. D. Pincus,  Spectral representations for finite Hilbert transformations,    Mathematische Zeitschrift,
    vol. {\bf 71},
    N 1, (1959),
    pp. 399-407
   
\bibitem{kopp60} W. Koppelman,
 On the Spectral Theory of Singular Integral Operators,
Transactions of the American Mathematical Society,
    vol. {\bf 97},
    N 1,
    (1960),
    pp. 35-63  
    
\bibitem{kopp64} W. Koppelman,    
    Spectral Multiplicity Theory for a Class of Singular Integral Operators,
    Transactions of the American Mathematical Society,  vol. {\bf 113},
    N 1,
    (1964),
    pp. 87-100  
   
    
\bibitem {OkE} S. Okada and D. Elliott, The finite Hilbert transform in $L^2$ , Mathematische
Nachrichten {\bf 153} (1991), 43-56


\bibitem {pincus64} J. D. Pincus,  On the Spectral Theory of Singular Integral Operators, Transactions of the American Mathematical Society,
    vol. {\bf 113},  N 1, (1964), pp. 101--128

\bibitem {putnam65} C. R. Putnam, The Spectra of Generalized Hilbert Transforms, Journal of Mathematics and Mechanics,    vol. {\bf 14}, N 5,
    (1965), pp. 857--872
 

   
   

\bibitem {pbm1} A. P. Prudnikov and Yu. A. Brychkov and O. I. Marichev,
    Integrals and series. Vol. 1. Elementary functions,
    Gordon and Breach, New York, 1986
    
    
    \bibitem {rosenblum66} M. Rosenblum,  A Spectral Theory for Self-Adjoint Singular Integral Operators,
   American Journal of Mathematics,     vol. {\bf 88}, N 2, (1966), pp. 314--328
   
   
   
   
  \bibitem {widom60} H. Widom, Singular Integral Equations in $L_p$,
   Transactions of the American Mathematical Society,   vol. {\bf 97}, N 1, (1960),  pp. 131--160

   
   
   

\end{thebibliography}
\end{document}